\newtheorem{problem}{Problem}[section]
\newtheorem{theorem}{Theorem}[section]
\newtheorem{lemma}{Lemma}[section]
\newtheorem{remark}{Remark}[section]
\begin{document}
\title{{\LARGE\bf Variational and numerical analysis of a dynamic viscoelastic contact problem with friction and wear}\thanks{This work was supported by  the National Natural Science Foundation of China (11471230, 11671282, 11771067) and the Applied Basic Project of Sichuan Province (2019YJ0204).}}
\author{{Tao Chen$^a$ Nan-jing Huang$^a$\thanks{Corresponding author,  E-mail: nanjinghuang@hotmail.com; njhuang@scu.edu.cn} \, and Yi-bin Xiao$^b$}\\
{\small\it a. Department of Mathematics, Sichuan University, Chengdu, Sichuan 610064, P.R. China}\\
{\small\it b. School of Mathematical Sciences, University of Electronic Science and Technology of China,}\\
{\small\it Chengdu, Sichuan, 611731, P. R. China}}
\date{}
\maketitle
\vspace*{-9mm}
\begin{center}
\begin{minipage}{5.8in}
{\bf Abstract.}
In this paper, we consider a dynamic viscoelastic contact problem with friction and wear, and describe it as a system of nonlinear partial differential equations. We formulate the previous problem as a hyperbolic quasi-variational inequality by employing the variational method. We adopt the Rothe method to show the existence and uniqueness of weak solution for the hyperbolic quasi-variational inequality under mild conditions. We also give a fully discrete scheme for solving the hyperbolic quasi-variational inequality and obtain error estimates for the fully discrete scheme.
\\ \ \\
{\bf Key Words and Phrases:}    Viscoelastic contact; frictional contact; wear; hyperbolic quasi-variational inequality; fully discrete scheme; error estimate.
\\ \ \\
{\bf 2010 AMS Subject Classifications:} 74M15, 74M10, 49J40, 74G25, 74G30, 65N30.
\\ \ \\
\end{minipage}
\end{center}

\section{Introduction}
Viscoelastic contact is a well-discussed physical phenomenon which describes the deformation process of a viscoelastic body when it contacts with a rigid foundation. Various theoretical results and numerical algorithms with applications have been studied extensively for quasistatic viscoelastic contact problems in the literature (see, for example, \cite{Han 2001, Han book 2002,SX1,SXC2}).

In order to describe the process of deformation of a viscoelastic body with wear when it contacts with a rigid body foundation, several quasistatic viscoelastic frictional contact problems with wear were introduced and studied under different conditions; for instance, we refer the reader to \cite{chen 2001,Rochdi 1998,Shillor 2000} and the references therein. It is worth mentioning that Chen et al. \cite{chen 2001} were the first to derive error estimates of fully discrete schemes for solving quasistatic viscoelastic frictional contact problems with wear. Recently, Gasi\'{n}ski et al. \cite{Gasinski 2016} proposed a mathematical model to describe quasistatic frictional contact with wear between a thermoviscoelastic body and a moving foundation. Very recently, Jureczka and Ochal \cite{Jureczka 2018} obtained the numerical analysis and simulations for the quasistatic elastic frictional contact problem with wear.

It is well known that Duvaut and Lions \cite{Duvaut 1976} were the first to study quasi-static frictional contact viscoelastic problems within the framework of variational inequalities. From then on, various variational inequalities, hemivariational inequalities and other related problems have been derived from different physical phenomena in contact mechanics and abundant research results have been obtained for their studies (\cite{HanSonfonea 2000,Kinderlehrer 1980,Huang 2014,Sofonea 2009,SMX,SX3,SXC1,Stromberg 2008, XS3}). Recently, Mig\'{o}rski and Zeng \cite{zengshengda 2017} studied a class of hyperbolic variational inequalities and applied their results to study the existence of weak solutions for the dynamic frictional contact problem without wear.

As a generalization of the contact problem considered in \cite{chau 2004},  Chau et al. \cite{chau 2003} introduced and studied a dynamic frictionless contact problem and gave a fully discrete scheme for solving such problem. Bartosz \cite{Bartosz 2006} considered a dynamical viscoelastic contact problem to modify the model treated by Ciulcu et al. \cite{Ciulcu 2002}.  Especially, Bartosz \cite{Bartosz 2006} established the existence of weak solutions of the dynamical viscoelastic contact problem with wear by using the surjectivity result for a class of pseudomonotone operators in the framework of hemivariational inequalities.  Recently, Cocou \cite{cocou 2015} extended the static contact problem considered by Rabier et al. \cite{Rabier 2000} to a dynamic viscoelastic contact problem with friction and obtained an existence and uniqueness of the weak solution for such problem. However, to our best knowledge,  there is no study for the dynamic viscoelastic contact problem with friction and wear in the existing literature.  The motivation of this paper is to make a new attempt in this direction.

In this paper, we consider a mathematical model to describe a dynamic viscoelastic contact problem with friction and wear, in which the material behavior is followed by the Kelvin-Voigt viscoelastic constitutive law and the frictional contact is modelled with a wear governed by a simplified version of Archard's law \cite{Stromberg 2008, Stromberg 1996} for the velocity field associated to a version of Coulomb's law of dry friction. Compared with the model of Bartosz et al. \cite{Bartosz 2006},  the effect of friction on the contact boundary has been considered in our model in order to describe the frictional contact phenomenon. Moreover,  the method used in this paper is quite different from the one employed in \cite{Bartosz 2006}.  In fact, we establish the existence of weak solution by employing the Rothe method combining with the Banach fixed point theorem while Bartosz et al. \cite{Bartosz 2006} adopted the surjectivity result for multi-valued pseudomonotone operators.

The rest of this paper is organized as follows. Section 2 presents some necessary preliminaries  and the weak formulation of the dynamic viscoelastic contact problem with friction and wear. Inspired by Mig\'{o}rski and Zeng \cite{zengshengda 2017}, we prove the existence and uniqueness of the solution for the hyperbolic quasi-variational inequality under mild conditions by applying the Rothe method \cite{Kacur 1986} in Section 3.  We obtain the existence and uniqueness of weak solution for the dynamic viscoelastic contact problem with friction and wear in Section 4. Finally, we present the fully discrete scheme for solving the hyperbolic quasi-variational inequality and derive the error estimates for the fully discrete scheme in Section 5.

\section{Preliminaries}
\setcounter{equation}{0}

In this section, we first recall some notations that will be used later.  The main materials can be found in the book \cite{Roubicek 2013}.

Let $\mathbb{S}^d$ denote the space of second order symmetric tensors on $\mathbb{R}^d$ with $d=2,3$. Let $":"$ and $"\cdot"$ represent the inner product on $\mathbb{S}^d$ and $\mathbb{R}^d$, respectively, and $"|\cdot|"$ denotes the Euclidean norm on $\mathbb{S}^d$ and $\mathbb{R}^d$. We adopt the Einstein summation convention and two indicators separated by commas to indicate the partial derivative of the function corresponding to the first indicator respect to the second indicator. For example, $u_{i,j}\equiv \frac{\partial u_i}{\partial x_j}$ and $u_{ij}v_j\equiv \sum_{j=1}^du_{ij}\cdot v_j$. In what follows, we use the following notations:
\begin{equation*}\label{Hdefine}
H=L^2(\Omega,\mathbb{R}^d)=\{u=(u_i)| u_i \in L^2(\Omega)\}, \quad  Q=L^2(\Omega,\mathbb{S}^d)=\{\sigma = (\sigma_{ij})|  \sigma_{ij}=\sigma_{ji}\in L^2(\Omega)) \},
\end{equation*}
where $u$ is the displacement field in $\mathbb{R}^d$. For a smooth displacement field $u$, we write $\varepsilon $ and $Div$ for the meaning of deformation and divergence operators of $u$, respectively. Here, $\varepsilon $ and $Div$ are defined by
$$\varepsilon(u)=(\varepsilon_{ij}(u)),\quad \varepsilon_{ij}(u)=\frac{1}{2}(u_{i,j}+u_{j,i}),\quad Div\, \sigma =(\sigma_{ij,j}).$$
It is easy to see that $H$ and $Q$ are real Hilbert spaces endowed with inner products as follows:
$$(u,v)_H=\int_{\Omega}u\cdot vdx=\int_{\Omega}u_iv_idx,\quad (\sigma,\tau)_Q=\int_{\Omega}\sigma:\tau dx=\int_{\Omega}\sigma_{ij}\tau_{ij}dx.$$
Assuming $u$ has a partial derivative in a certain sense such as distribution sense, we can define a Hilbert space $ H_1=\{u\in H;\varepsilon(u)\in Q\}$ endowed with the inner product as follows:
$$(u,v)_{H_1}=(u,v)+(\varepsilon(u),\varepsilon(v))_Q.$$
To simplify notation,  let $\|\cdot \|_H$, $\|\cdot\|_Q$ and $\|\cdot\|_{H_1}$ denote the norms on spaces $H$, $Q$ and $H_1$, respectively.

We recall some spaces $W^{k,p}([0,T];X)$, $H^k([0,T];X)$ and $C([0,T]; X)$ for a Banach space $X$ equipped with the norm $\|\cdot\|_X$ for $1<p<\infty$ and $1\leq k$. Let $W^{k,p}([0,T];X)$ denote the space of all functions from $[0,T]$ to $X$ with the norm
\begin{equation*}
\|f\|_{W^{k,p}([0,T];X)}=\left\{
\begin{array}{ll}
\left(\int_0^T\sum_{1\leq l\leq k}\|\partial^l_tf \|_X^pdt\right)^{1/p}, & {1 \leq p<\infty},\\
\max_{0\leq l\leq k}\mbox{esssup}_{0\leq t\leq T}\|\partial^l_tf \|_X, &  {p=\infty}.
\end{array} \right.
\end{equation*}
 When $p=2$ or $k=0$, $W^{k,2}([0,T];X)$ is written as $H^k([0,T];X)$ or $L^p([0,T];X)$, respectively. Let $C([0,T]; X)$ denote the space of all continuous functions from $[0,T]$ to $X$ with the norm
$$\|f\|_{C([0,T];X)}=\max_{t\in [0,T]}\|f(t)\|_X.$$

Clearly, $C([0,T];X)$, $W^{k,p}([0,T];X)$ and $H^k([0,T];X)$ are all Banach spaces when $X$ is a Banach space.

Now we are in a position to discuss the physical problem mentioned above, namely, a viscoelastic body occupies the domain $\Omega\in\mathbb{R}^d$ with a Lipshitz continuous boundary $\Gamma$ and it will be deformed due to external forces. Especially, the deformation will be characterized by the viscoelastic property inside the body and the friction property at the boundary which could be divided into three disjoint measurable parts $\Gamma_1$, $\Gamma_2$ and $\Gamma_3$ with meas($\Gamma_1$)$>0$. Let $I:=[0,T]$ be the time interval and let $\varepsilon(u)$ and $\sigma(u)$ denote the linearized strain tensor and stress tensor of displacement vector $u$, respectively.  The relation between $\varepsilon(u)$ and $\sigma(u)$ is characterized by the viscoelastic rule, i.e.,  $\sigma = \mathbb{A}(\varepsilon(\dot{u}))+\mathbb{G}(\varepsilon(u))$, where $\mathbb{A}$, $\mathbb{G}$ are viscosity operator and elasticity operator, respectively.

Let $\nu$ denote the unit outer normal vector on $\Gamma$. The normal and tangential component of the displacement $u$ and stress field are denoted by
$$v_{\nu}=v\cdot \nu,\quad v_{\tau}=v-v_{\nu}v,\quad \sigma_{\nu}=(\sigma\nu)\cdot \nu, \quad \sigma_{\tau}=\sigma\nu-\sigma_{\nu}\nu.$$
Then we have the following Green formula:
\begin{equation}\label{equ:Greenformula}
(\sigma,\varepsilon(v))_Q+(Div \sigma,v)_H=\int_{\Gamma}\sigma \nu\cdot v d\Gamma,  \quad \forall v \in H.
\end{equation}
 We concern with the deformation field of the body on the time interval $[0,T]$ with $T>0$. The body is clamped on $\Gamma_1\times[0,T]$ so the displacement field vanishes there. A volume force $f_0$ acts in $\Omega \times [0,T]$ and surface traction of density $g$ acts on $\Gamma_2 \times [0,T]$. Affected by these forces, the body may contact with the obstacle, i.e.,  the foundation, on the contact surface $\Gamma_3$.
 Following the problem considered in \cite{chen 2001}, we can model the friction contact between the viscoelastic body and foundation with Coulomb's law of dry friction.  Assuming there is only sliding contact, we have
 $$
 |\sigma_{\tau}|=-\mu\sigma_{\nu},\quad \sigma_{\tau}=-\lambda(\dot{u}_{\tau}-v^*),\quad -\sigma_{\nu}=\beta |\dot{u}_{\nu}|,\quad \lambda \geq 0,
 $$
where $v^*$ is a constant vector which represents the displacement of the foundation. Let $w$ denote the wear function which measures the wear of the surface. Then it follows from \cite{chen 2001} that $u_{\nu}=-w$. Let $\rho$, $u_0$ and $v_0$ denote the mass density, initial displacement and initial velocity field, respectively. Under above assumptions, the formulation of the problem of viscoelastic contact with friction and wear reads as follows:
\begin{problem}\label{problem:P}
Find a deformation field $u:\Omega \times [0,T]\rightarrow \mathbb{R}^{d}$ and a stress field $\sigma :\Omega \times [0,T]\rightarrow \mathbb{S}^d$ such that
\begin{align*}
\begin{cases}
\sigma = \mathbb{A}(\varepsilon(\dot{u}))+\mathbb{G}(\varepsilon(u))\quad in\; \Omega \times (0,T),\\
Div \; \sigma + f_0 = \rho \ddot{u} \quad in \; \Omega \times (0,T),\\
u = 0 \quad on\; \Gamma_1 \times(0,T),\\
\sigma \nu =g \quad on \;\Gamma_2 \times (0,T),\\
-\sigma_{\nu}=\beta |\dot{u}_{\nu}|,\;|\sigma_{\tau}|=-\mu \sigma_{\nu},\;\sigma_{\tau}=-\lambda(\dot{u}_{\tau}-v^*),\lambda \geq 0\quad on \; \Gamma_3 \times (0,T),\\
u(0)=u_0,\; \dot{u}(0)=v_0 \quad in \; \Omega.
\end{cases}
\end{align*}
\end{problem}

To derive the variational formulation for Problem \ref{problem:P}, we need to introduce another space. Let $V$ denote a closed subspace of $H_1$ defined by
\begin{equation}\label{Vdefine}
V=\{v\in H_1| \mbox{$v=0$  on $\Gamma_1$}\}.
\end{equation}
We define the inner product $(\cdot,\cdot)_V$ on $V$ by setting
 $$(u,v)_V=(\varepsilon(u),\varepsilon(v))_Q, \quad \forall u,v\in V.$$
Since meas($\Gamma_1$)$>0$, Korn's inequality implies that there exists a positive number $C_K$ (depending only on $\Omega$) and $\Gamma_1$ such that
$$\|\varepsilon(v)\|_Q\geq C_K\|v\|_{H_1},\quad \forall v \in V.$$
Next we turn to derive the variational formulation for Problem \ref{problem:P}. From the second equality in Problem \ref{problem:P}, one has
\begin{equation}\label{equ:1weak}
(\rho \ddot{u}(t),v)_H-(Div\sigma(t),v)_H=(f_0(t),v), \quad \forall v \in V.
\end{equation}
Application of the Green formula \eqref{equ:Greenformula} enables us to rewrite \eqref{equ:1weak} as follows:
\begin{equation}\label{equ:2weak}
(\rho \ddot{u}(t),v)_H+(\sigma(t),\varepsilon(v))_Q=(f_0(t),v)_H+\int_{\Gamma}\sigma(t)\nu v d\Gamma,
\end{equation}
where
\begin{eqnarray}\label{equ:3weak}
\int_{\Gamma}\sigma(t)\nu vd\Gamma&=&\int_{\Gamma_1}\sigma(t)\nu vd\Gamma+\int_{\Gamma_2}\sigma(t)\nu vd\Gamma+\int_{\Gamma_3}\sigma(t)\nu vd\Gamma\nonumber\\
&=&\int_{\Gamma_2}g vd\Gamma+\int_{\Gamma_3}\sigma(t)\nu vd\Gamma\nonumber\\
&=&\int_{\Gamma_2}g vd\Gamma+\int_{\Gamma_3}(\sigma_{\nu}(t)\nu+\sigma_{\tau}(t)\ )vd\Gamma
\end{eqnarray}
due to $v=0$ on $\Gamma_1$ and $\sigma \nu=g$ on $\Gamma_2$.  Taking $v$ as $v-\dot{u}$ in \eqref{equ:2weak}, from the boundary condition of $\Gamma_3$, one has
\begin{eqnarray}\label{equ:4weak}
&&-\int_{\Gamma_3}(\sigma_{\nu}(t)\nu+\sigma_{\tau}(t))(v-\dot{u})d\Gamma \nonumber\\
&=&-\int_{\Gamma_3}\sigma_{\nu}(t) (v_{\nu}-\dot{u}_{\nu}(t))d\Gamma-\int_{\Gamma_3}\sigma_{\tau}(t) (v_{\tau}-\dot{u}_{\tau}(t))d\Gamma \nonumber\\
&=& \int_{\Gamma_3}\beta |\dot{u}_{\nu}| (v_{\nu}-\dot{u}_{\nu}(t))d\Gamma-\int_{\Gamma_3}\sigma_{\tau}(t) (v_{\tau}-v^*)d\Gamma
   +\int_{\Gamma_3}\sigma_{\tau}(t) (\dot{u}_{\tau}(t)-v^*)d\Gamma \nonumber\\
&\leq& \int_{\Gamma_3}\beta |\dot{u}_{\nu}| (v-\dot{u}(t))d\Gamma +\int_{\Gamma_3}|\sigma_{\tau}(t)| (|v_{\tau}-v^*|)d\Gamma
   -\int_{\Gamma_3}|\sigma_{\tau}(t)| |\dot{u}_{\tau}(t)-v^*|d\Gamma \nonumber\\
&=& \int_{\Gamma_3}\beta |\dot{u}_{\nu}| (v_{\nu}-\dot{u}_{\nu}(t))d\Gamma+\int_{\Gamma_3}\beta \mu|\dot{u}_{\nu}|(|v_{\tau}-v^*|-|\dot{u}_{\tau}(t)-v^*|)d\Gamma.
\end{eqnarray}
Letting
$$j(u,v)=\int_{\Gamma_3}\beta|u_{\nu}|(\mu|v_{\tau}-v^*|+v_{\nu})d\Gamma, \quad L(v)=(f_0(t),v)_H+\int_{\Gamma_2 }gvd\Gamma,$$
it follows from \eqref{equ:3weak} and \eqref{equ:4weak} that \eqref{equ:2weak} can  be rewritten as follows:
\begin{equation*}\label{equ:5weak}
\begin{aligned}
(\rho \ddot{u}(t),v-\dot{u}(t))_H+(\sigma(t),\varepsilon(v-\dot{u}(t)))_Q+j(\dot{u}(t),v)-j(\dot{u}(t),\dot{u}(t))\geq L(v-\dot{u}(t)).
\end{aligned}
\end{equation*}

Thus, the variational formulation for Problem \ref{problem:P} can be stated as follows:
\begin{problem}\label{problem:Pv}
Find a deformation field $u:[0,T]\rightarrow V$ such that
\begin{align*}
\begin{cases}
(\rho \ddot{u}(t),v-\dot{u}(t))_H+(\mathbb{A}(\varepsilon(\dot{u}(t)))+\mathbb{G}(\varepsilon(u(t))),\varepsilon(v-\dot{u}(t)))_Q+j(\dot{u}(t),v)-j(\dot{u}(t),\dot{u}(t))\\
\qquad  \geq L(v-\dot{u}(t)),\quad \forall v\in V, \; a.e. \; t \in [0,T], \\
u(0)=u_0,\;\dot{u}(0)=v_0.
\end{cases}
\end{align*}
\end{problem}

\section{Existence and uniqueness of Problem \ref{problem:Pv}}
\setcounter{equation}{0}

To solve Problem \ref{problem:Pv}, we consider a class of abstract variational inequalities in Banach spaces. Firstly, we introduce the Gelfand triple of Hilbert spaces which can be found in \cite{zengshengda 2017,Roubicek 2013}. Let $V$ be the strictly convex, reflexive and separable Banach space and $H$ the separable Hilbert space with $V$ embedding to $H$ densely and compactly. Identifying $H$ with its dual $H^*\cong H$, we obtain the Gelfand triple
$V\hookrightarrow H \hookrightarrow V^*$, where $\hookrightarrow$ denotes the continuous embedding. We write the embedding operator between $H$ and $V^*$ as $i^*$, which is continuous and compact. Therefore, we can consider the duality pairing
$\langle \cdot,\cdot\rangle_{V^*\times V}$ as continuous extension of the inner product $(\cdot,\cdot)_H$ on $H$, i.e.,
$$(u,v)_H=\langle u,v\rangle_{V^*\times V},\quad \forall u\in H,  \forall v\in V.$$

Let $\mathbb{V}\equiv L^2(I;V)$, $\mathbb{H}\equiv L^2(I;H)$ and $\mathbb{V}^*\equiv L^2(I;V^*)$. We define the duality pairing on $\mathbb{V}$ and the inner product on $\mathbb{H}$ as follows:
$$\langle S,L\rangle_{\mathbb{V}\times\mathbb{V}^*}=\int_0^T\langle S(t),L(t)\rangle_{V^*\times V}dt, \quad (S,L)_{\mathbb{H}}=\int_0^T(S(t),L(t))_Hdt.$$
It is well known that $\mathbb{H}$ is a Hilbert space and the space $\mathbb{W} \equiv \{v|v \in \mathbb{V},\dot{u} \in \mathbb{V}^*\}$ equipped with the graph norm $\|w\|_{\mathbb{W}}=\|w\|_{\mathbb{V}}+\|\dot{w}\|_{\mathbb{V}^*}$ is a separable reflexive Banach space, where $\dot{w}$ stands for the weak derivative of $w$. Furthermore, we know that $\mathbb{W} \hookrightarrow \mathbb{V}\hookrightarrow\mathbb{H}\hookrightarrow\mathbb{V}^*$ (see \cite{Roubicek 2013}).  We use the notation $\|\cdot\|$ to designate the norm in $V$ if there is no confusion.  Moreover, by no abuse of notation, we denote by $C$ a constant whose value may change from line to line when no confusing can arise.

Given two symmetry operators $A,\; B:V\to V^*$, a functional $j:V\times V \to \textrm{R}$ and two initial values $u_0\in V$ and $v_0 \in H$, we begin by study the following hyperbolic quasi-variational inequality:
\begin{problem}\label{problem:VI}
Find  $u \in\mathbb{V}$ with $\dot{u} \in \mathbb{W}$ such that
\begin{align*}
\begin{cases}
\langle\ddot{u}(t)+Au(t)+B\dot{u}(t)-f(t),v-\dot{u}(t)\rangle_{V^*\times V}+j(\dot{u}(t),v)-j(\dot{u}(t),\dot{u}(t))\geq 0,\;\forall v\in V, \;a.e. \; t \in I,\\
u(0)=u_0,\;\dot{u}(0)=v_0.
\end{cases}
\end{align*}
\end{problem}
In order to solve Problem \ref{problem:VI}, we impose the following assumptions.
\begin{itemize}\label{originalhypothesis}
\item[\textbf{H}(1)] There exists a constant $M_B>0$  such that
\begin{equation}\label{pro:pro1}
\langle B(u_1)-B(u_2),u_1-u_2\rangle\geq M_B\| u_1-u_2 \|^2_V, \quad  \forall u_1, u_2 \in V.
\end{equation}
\item[\textbf{H}(2)] There exists a constant $L_B>0$  such that
\begin{equation}\label{pro:pro2}
 \| B(u_1)-B(u_2) \|_{V^*}\leq L_B\| u_1-u_2 \|_V, \quad \forall u_1, u_2 \in V.
\end{equation}
\item[\textbf{H}(3)]$A\in \mathcal{L}(V,V^*)$ is strongly monotone, i.e., there exists a constant $M_A>0$ such that
\begin{equation}\label{pro:pro3}
\langle A(u),u\rangle\geq M_A\| u \|^2_V, \quad \forall u\in V.
\end{equation}
\item[\textbf{H}(4)] The norm of $A$ is $L_A$, i.e.
\begin{equation}\label{pro:pro4}
 \| Au \|_{V^*}\leq L_A\| u \|_V, \quad \forall u_1, u_2 \in V.
\end{equation}
\item[\textbf{H}(5)] For any $u,v \in V$,
\begin{equation}\label{pro:pro5}
\langle Au,v\rangle=\langle Av,u\rangle, \quad
\end{equation}
\item[\textbf{H}(6)]
There exists a constant $L_j>0$  such that
\begin{equation}\label{pro:pro6}
j(g_1,v_2)+j(g_2,v_1)-j(g_1,v_1)-j(g_2,v_2)\leq L_j\|g_1-g_2\|_V\|v_1-v_2\|_V, \quad \forall g_1,g_2,v_1, v_2 \in V.
\end{equation}
\item[\textbf{H}(7)]
There exists a constant $C_j>0$  such that
\begin{equation}\label{pro:pro7}
j(g,v_1)-j(g,v_2)\leq C_j\|g\|_V\|v_1-v_2\|_V, \quad j(v_1,g)-j(v_2,g)\leq C_j\|g\|_V\|v_1-v_2\|_V, \quad \forall g,v_1, v_2 \in V.
\end{equation}
\item[\textbf{H}(8)] The function $f$ satisfies
\begin{equation}\label{pro:pro8}
f \in H^2(I;V^*).
\end{equation}
\item[\textbf{H}(9)]  For any $u\in V$,
\begin{equation}\label{pro:pro9}
j(u,\cdot) \; \mbox{is a convex functional in V for all u }\in V.
\end{equation}
\end{itemize}
\begin{remark}
We would like to mention the following facts: (a) the assumption \eqref{pro:pro6} was first introduced by Han et al. \cite{Han 2001} to study a class of parabolic quasi-variational inequalities; (b) the assumptions \eqref{pro:pro7} and \eqref{pro:pro9} imply that $j(u,\cdot)$ is a continuous convex functional in $V$ for all $u \in V$.
\end{remark}
\begin{remark}\label{remark:r1} We would like to point out the following facts: (a) If $B=0$ and $j$ is proper, convex and lower semi-continuous, then Problem \ref{problem:VI} degenerates to the following problem (\cite{zengshengda 2017}):
\begin{align*}
\begin{cases}
\langle\ddot{u}(t)+ Au(t)-f(t),v-\dot{u}(t)\rangle+j(v)-j(\dot{u}(t))\geq 0,\quad \forall v\in V, \; a.e\; t \in I,\\
u(0)=u_0,\;\dot{u}(0)=v_0;\quad u_0\in \mathbb{W},v_0\in \mathbb{V};
\end{cases}
\end{align*}
(b) If $\ddot{u}(t)$ can be neglected, then Problem \ref{problem:VI} reduces to the parabolic variational inequality studied by Han and Sofonea \cite{HanSonfonea 2000}; (c) If $j$ contains only one variable, then Problem \ref{problem:VI} is the classic hyperbolic variational inequality considered in \cite{Atkinson 2017}.
\end{remark}

Similar to the study of \cite{zengshengda 2017}, we will employ the Rothe method to prove the existence and uniqueness of the solution for Problem \ref{problem:VI}. It adopts the time semi-discrete scheme to obtain a sequence of convergence function. At each time step, we need to solve an elliptic variational problem. Then, we apply the piecewise constant and piecewise affine interpolation to approximate the solution of Problem \ref{problem:VI}.

For any given $N \in \mathbb{N}$, let $f_{\tau}^k=\frac{1}{\tau}\int_{t_{k-1}}^{t_k}f(t)dt$ for $k = 1, \cdots, N$, where $\tau = \frac{T}{N}$ and $t_k = k\tau$. We use the following notations for simplicity
\begin{align}\label{Rothe}
\nu_{\tau}^k=\frac{u_{\tau}^{k}-u_{\tau}^{k-1}}{\tau},\quad z_{\tau}^k=\frac{\nu_{\tau}^{k}-\nu_{\tau}^{k-1}}{\tau},\quad
u_{\tau}^k=u_0+\tau \sum_{i=1}^{k} \nu_{\tau}^i,\quad u_{\tau}^0=u_0,\quad \nu_{\tau}^0=v_0.
\end{align}
We now consider the following semi-discrete scheme for solving Problem \ref{problem:VI}.
\begin{problem}\label{problem:Rothe}
 Find $\{\nu_{\tau}^k\}_{k=1}^{N}\subset V$ with $u_{\tau}^0=u_0$ and $\nu_{\tau}^0=v_0$ such that
\begin{align}\label{equ:Rothe}
\begin{cases}
\left(\frac{\nu_{\tau}^k-\nu_{\tau}^{k-1}}{\tau},v-\nu_{\tau}^k\right)_H
+\left\langle A(u_0+\tau\sum_{i=1}^{k}\nu_{\tau}^i)+B\nu_{\tau}^k-f_{\tau}^k,v-\nu_{\tau}^k\right\rangle
+j(\nu_{\tau}^k,v)-j(\nu_{\tau}^k,\nu_{\tau}^k)\geq 0,\\ \quad \forall v\in V, \; k=1,2,\cdots,N.\\
\end{cases}
\end{align}
\end{problem}

\begin{lemma} \label{lem:Grownwell}\cite{Roubicek 2013}
Provided $e_n \leq cg_n+\tau \sum_{k=1}^{n-1}e_k$ with $n=1,2,\dots,N$, one has
$$\max_{1\le k\le N} e_k \leq C \max_{1\le k\le N} g_k,$$
and
$$\max_{1\le k\le N} e_k \leq\max_{1\le k\le N} \hat{c} \left( g_k+\tau\sum_{1\le j\le k}g_j\right),$$
where $\hat{c}$, $c$, $C$, $\tau$, $\{e_k\}_{k=1}^N$ and $\{g_k\}_{k=1}^N$ are all positive numbers.
\end{lemma}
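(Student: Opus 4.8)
The statement is a discrete Gronwall inequality, and the plan is to convert the accumulated hypothesis into a one-step recursion for the partial sums, solve that recursion in closed form, and then control the resulting geometric weight \emph{uniformly} in $N$ by exploiting the relation $\tau N = T$.

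First I would introduce the accumulated quantity $F_n = \tau\sum_{k=1}^{n-1} e_k$, so that $F_1 = 0$ (the empty sum), and $F_{n+1} - F_n = \tau e_n$. Substituting the hypothesis $e_n \le c g_n + F_n$ into this difference yields the affine recursion
$$F_{n+1} \le (1+\tau)F_n + c\tau g_n.$$
Unrolling from $F_1 = 0$ then gives the closed form
$$F_{n+1} \le c\tau \sum_{j=1}^{n} (1+\tau)^{\,n-j} g_j.$$

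The key estimate is to bound the geometric weight independently of the discretization. Since $1+\tau \le e^{\tau}$ and $0 \le n-j \le N$, we have $(1+\tau)^{\,n-j} \le (1+\tau)^{N} \le e^{\tau N} = e^{T}$; the crucial point is that $\tau N = T$ is \emph{fixed}, so the bound $e^{T}$ does not degenerate as $N\to\infty$ (equivalently $\tau\to 0$). This produces $F_{n+1} \le c\,e^{T}\,\tau\sum_{j=1}^{n} g_j$ for every $n$. I would then feed this back into $e_n \le c g_n + F_n$. For the first inequality, I bound the sum crudely by $\tau\sum_{j=1}^{n-1} g_j \le \tau N\max_k g_k = T\max_k g_k$, obtaining $e_n \le c(1+T e^{T})\max_k g_k$, so the claim holds with $C = c(1+T e^{T})$. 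For the second inequality I instead keep the summation, writing $e_n \le c g_n + c\,e^{T}\tau\sum_{j=1}^{n} g_j \le \hat c\,(g_n + \tau\sum_{j=1}^{n} g_j)$ with $\hat c = c\,e^{T}$, and then take the maximum over $n$.

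The argument is routine rather than deep; the only points requiring genuine care are the index bookkeeping (the empty sum at $n=1$ makes the base case $e_1 \le c g_1$ automatic, and the recursion for $F_n$ is thereby well posed) and the verification that the constants $C$ and $\hat c$ remain bounded as $\tau\to 0$. The latter is precisely what the identity $\tau N = T$ secures, and it is exactly this $N$-uniformity that makes the lemma usable for the convergence rate and error estimates of Section~5.
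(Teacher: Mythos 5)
Your proof is correct: the passage to the partial sums $F_n=\tau\sum_{k=1}^{n-1}e_k$, the affine recursion $F_{n+1}\le(1+\tau)F_n+c\tau g_n$, its unrolling, and the uniform bound $(1+\tau)^{N}\le e^{\tau N}=e^{T}$ constitute the standard discrete Gronwall argument, and the resulting constants $C=c(1+Te^{T})$ and $\hat c=c\,e^{T}$ are indeed independent of $N$. The paper offers no proof of this lemma --- it is quoted from Roub\'i\v{c}ek's book --- so your argument is essentially the canonical one; your remark that the statement tacitly needs $\tau N$ bounded (here $\tau N=T$) for the constants to be $\tau$-independent is a correct and useful clarification of the lemma as written.
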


\begin{lemma}\label{lem:Rothe}
Assume that conditions \eqref{pro:pro1}-\eqref{pro:pro4}, \eqref{pro:pro6} and \eqref{pro:pro9} are satisfied with $M_B>L_j$. Then there exists a constant $\tau_0$ such that, for any $\tau \in (0,\tau_0)$, Problem \ref{problem:Rothe} has a unique solution.
\end{lemma}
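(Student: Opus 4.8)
The plan is to solve Problem \ref{problem:Rothe} inductively in $k$ and, at each step, to recast \eqref{equ:Rothe} as an \emph{elliptic} quasi-variational inequality for the single unknown $w:=\nu_{\tau}^k$, which I then solve by the Banach fixed point theorem. Suppose $\nu_{\tau}^0,\dots,\nu_{\tau}^{k-1}$ (hence $u_{\tau}^{k-1}=u_0+\tau\sum_{i=1}^{k-1}\nu_{\tau}^i$) have already been constructed. Using $u_0+\tau\sum_{i=1}^{k}\nu_{\tau}^i=u_{\tau}^{k-1}+\tau w$ and the linearity of $A$, one has $A(u_0+\tau\sum_{i=1}^{k}\nu_{\tau}^i)=Au_{\tau}^{k-1}+\tau Aw$, so \eqref{equ:Rothe} reads: find $w\in V$ with
\[
\langle \Phi_{\tau} w-F_{\tau}^k, v-w\rangle + j(w,v)-j(w,w)\ge 0,\qquad \forall v\in V,
\]
where $\Phi_{\tau}w:=\tfrac{1}{\tau}i^{*}w+\tau Aw+Bw\in V^{*}$ (the inertial term being read in $V^{*}$ through $(w,\cdot)_H=\langle w,\cdot\rangle_{V^{*}\times V}$) and $F_{\tau}^k:=\tfrac{1}{\tau}i^{*}\nu_{\tau}^{k-1}-Au_{\tau}^{k-1}+f_{\tau}^k\in V^{*}$ is a fixed datum.

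First I would record the two structural properties of $\Phi_{\tau}$. By \eqref{pro:pro1} and \eqref{pro:pro3}, together with the nonnegativity of the inertial term,
\[
\langle \Phi_{\tau} w_1-\Phi_{\tau} w_2, w_1-w_2\rangle \ge (M_B+\tau M_A)\|w_1-w_2\|_V^2\ge M_B\|w_1-w_2\|_V^2,
\]
so $\Phi_{\tau}$ is strongly monotone with constant $\ge M_B>0$; by \eqref{pro:pro2}, \eqref{pro:pro4} and the continuity of $V\hookrightarrow H\hookrightarrow V^{*}$, $\Phi_{\tau}$ is Lipschitz continuous, with constant $\le \tfrac{c}{\tau}+\tau L_A+L_B<\infty$ for each fixed $\tau>0$. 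Hence, for any frozen $g\in V$, the auxiliary elliptic variational inequality
\[
w\in V:\quad \langle \Phi_{\tau} w-F_{\tau}^k, v-w\rangle + j(g,v)-j(g,w)\ge 0,\qquad \forall v\in V,
\]
has a unique solution $w=:\Lambda g$, by the standard existence–uniqueness theorem for a strongly monotone, Lipschitz operator paired with an admissible (proper, convex, lower semicontinuous) functional $j(g,\cdot)$; here convexity of $j(g,\cdot)$ is exactly \eqref{pro:pro9} (l.s.c. being guaranteed in the concrete model, cf.\ the preceding remark).

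The decisive step is to show $\Lambda:V\to V$ is a contraction. Let $w_i=\Lambda g_i$ for $i=1,2$; testing the inequality defining $w_1$ with $v=w_2$, that defining $w_2$ with $v=w_1$, and adding, I obtain
\[
\langle \Phi_{\tau} w_1-\Phi_{\tau} w_2, w_1-w_2\rangle \le j(g_1,w_2)+j(g_2,w_1)-j(g_1,w_1)-j(g_2,w_2).
\]
Bounding the left side below by $M_B\|w_1-w_2\|_V^2$ and, via \eqref{pro:pro6} (applied with the pairing $(g_1,g_2;w_1,w_2)$ produced by the cross-testing), the right side above by $L_j\|g_1-g_2\|_V\|w_1-w_2\|_V$, I get $\|\Lambda g_1-\Lambda g_2\|_V\le (L_j/M_B)\|g_1-g_2\|_V$. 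Since $M_B>L_j$, the ratio $L_j/M_B<1$, so $\Lambda$ is a strict contraction and the Banach fixed point theorem yields a unique fixed point $w=\nu_{\tau}^k$, which is precisely the unique solution of \eqref{equ:Rothe} at step $k$. Induction on $k$ then produces the whole sequence $\{\nu_{\tau}^k\}_{k=1}^{N}$.

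I expect the main obstacle to be handling the quasi-variational structure, namely the dependence of $j$ on the unknown in \emph{both} slots, which forces the freezing-plus-fixed-point detour and requires verifying that the frozen problem is genuinely well-posed and that \eqref{pro:pro6} delivers a contraction constant strictly below $1$; the hypothesis $M_B>L_j$ is precisely what makes this work. I would also note that the contraction ratio $L_j/M_B$ is independent of $\tau$, so the existence and uniqueness at each step in fact hold for every $\tau>0$; the threshold $\tau_0$ is a technical device (one may take any $\tau_0\le T$), its only role being to keep the $\tau$-dependent Lipschitz constant of $\Phi_{\tau}$—and the a priori bounds used in the subsequent convergence analysis—uniformly under control.
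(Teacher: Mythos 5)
Your proof is correct, and it takes a genuinely different --- in fact leaner --- route than the paper's. The paper also reduces step $k$ to the elliptic inequality \eqref{equ:variRothe1}, but it then freezes \emph{two} arguments: the first slot of $j$ (a parameter $g$) \emph{and} the argument of $\tau A$ (a parameter $h$), producing the doubly parametrized auxiliary problem \eqref{equ:variRothe3}. It first shows that $g\mapsto u_{gh}$ is a contraction with ratio $L_j/M_B<1$, and then that $h\mapsto u_h$ is a contraction with ratio $\tau L_A/(M_B-L_j)$; this second, nested contraction is the sole reason the smallness condition $\tau<\tau_0$ with $\tau_0 L_A/(M_B-L_j)<1$ appears in the statement of the lemma. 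You instead keep $\tau A w$ inside the monotone operator $\Phi_\tau$, which costs nothing because $A$ is monotone (indeed it improves the monotonicity constant to $M_B+\tau M_A$), and you freeze only the first slot of $j$. This collapses the nested fixed-point argument into a single one with ratio $L_j/M_B$ independent of $\tau$, and it yields the slightly stronger conclusion that the semi-discrete problem is uniquely solvable for \emph{every} $\tau>0$, not merely for $\tau<\tau_0$ --- your closing remark that $\tau_0$ is only a device for the later a priori estimates is exactly right. The one point at which you are no more (and no less) rigorous than the paper is the invocation of the standard well-posedness theorem for the frozen elliptic inequality: both arguments need $j(g,\cdot)$ to be proper and lower semicontinuous in addition to convex (hypothesis \eqref{pro:pro9}), which the paper likewise takes for granted by citing \cite{Glowinski 1981,Xiao 2012} under the same hypotheses.
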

\begin{proof}
Suppose that $\{\nu_{\tau}^i\}_{i=1}^{k-1}$ are given with $k\geq2$. Then we can rewrite Problem \ref{problem:Rothe} as follows:  Find $\nu_{\tau}^k\in V$ such that
\begin{eqnarray}\label{equ:variRothe1}
&&\left(\frac{\nu_{\tau}^k}{\tau},v-\nu_{\tau}^k\right)_H
+\langle B\nu_{\tau}^k+\tau A\nu_{\tau}^k,v-\nu_{\tau}^k\rangle
+j(\nu_{\tau}^k,v)-j(\nu_{\tau}^k,\nu_{\tau}^k)\nonumber\\
&\geq& \left\langle f_{\tau}^k-A\left(u_0+\tau \sum_{i=1}^{k-1}\nu_{\tau}^i\right),v-\nu_{\tau}^k\right\rangle
+\left(\frac{\nu_{\tau}^{k-1}}{\tau},v-\nu_{\tau}^k\right)_H.
\end{eqnarray}
Let
$$
F_{\tau}=f_{\tau}^k+i^*i\frac{\nu_{\tau}^{k-1}}{\tau}-A\left(u_0+\tau \sum_{i=1}^{k-1}\nu_{\tau}^i\right).
$$
Then Problem \ref{problem:Rothe} can be transformed as follows: Find $u \in V$ such that
\begin{eqnarray*}\label{equ:variRothe2}
\left\langle  i^*i\frac{u}{\tau}+Bu+\tau Au,v-u\right\rangle
+j(u,v)-j(u,u)\geq \langle F_{\tau},v-u\rangle, \quad \forall v\in V.
\end{eqnarray*}
For any given $g,\;h\in V$, we consider the following variational inequality of finding $u_{gh}\in V$ such that
\begin{eqnarray}\label{equ:variRothe3}
\left\langle i^*i\frac{u_{gh}}{\tau}+Bu_{gh}+ \tau Ah,v-u_{gh}\right\rangle+j(g,v)-j(g,u)
\geq \langle F_{\tau},v-u_{gh}\rangle, \quad  \forall v\in V.
\end{eqnarray}
We note that problem \eqref{equ:variRothe3} is an elliptic variational inequality on the Banach space $V$. In terms of \eqref{pro:pro1}-\eqref{pro:pro4}, \eqref{pro:pro9} and the definition of embedding operator $i$,  we can see that the classical elliptic variational inequality \eqref {equ:variRothe3} has a unique solution $u_{gh}$ (see \cite{Glowinski 1981,Xiao 2012}). Moreover, by Lemma 3.1 of Xiao et al. \cite{Xiao 2012}, we have $u_{gh} \in V$.

Next, we show that the mapping $g\mapsto u_{gh}$ is contractive for any given $h\in V$. In fact, letting $g=g_1$ and $g=g_2$ in \eqref{equ:variRothe3}, respectively, one has
\begin{eqnarray}\label{11}
\left\langle i^*i\frac{u_{g_{1h}}}{\tau}+Bu_{g_{1h}}+ \tau Ah,v-u_{g_{1h}}\right\rangle+j(g_{1},v)-j(g_{1},u_{g_{1h}})
\geq \langle F_{\tau},v-u_{g_{1h}}\rangle, \quad  \forall v\in V
\end{eqnarray}
and
\begin{eqnarray}\label{12}
\left\langle i^*i\frac{u_{g_{2h}}}{\tau}+Bu_{g_{2h}}+ \tau Ah,v-u_{g_{2h}}\right\rangle+j(g_{2},v)-j(g_{2},u_{g_{2h}})
\geq \langle F_{\tau},v-u_{g_{2h}}\rangle, \quad  \forall v\in V.
\end{eqnarray}
Taking $v=u_{g_{2h}}$ and $v=u_{g_{1h}}$ in \eqref{11} and \eqref{12}, respectively, and adding the above two inequalities, we have
\begin{eqnarray}\label{equ:variRothe4}
&&\left(\frac{u_{g_{1h}}-u_{g_{2h}}}{\tau},u_{g_{1h}}-u_{g_{2h}}\right)_H+\langle Bu_{g_{1h}}-Bu_{g_{1h}},u_{g_{1h}}-u_{g_2h}\rangle\nonumber\\
&\leq& j(g_1,u_{g_{2h}})+j(g_2,u_{g_{1h}})-j(g_1,u_{g_{1h}})-j(g_2,u_{g_{2h}}).
\end{eqnarray}
Employing the assumptions of operator $B$ and functional $j$, it follows from \eqref{equ:variRothe4} that
\begin{eqnarray*}\label{equ:variRothe5}
M_B\left\|u_{g_1h}-u_{g_2h}\right\|\leq L_j\|g_1-g_2\|.
\end{eqnarray*}
Since $M_B> L_j$, we conclude that $g\mapsto u_{gh}$ is a contractive mapping and so it has a unique fixed point $u_h\in V$ by Banach fixed point theorem. This implies  that $u_h$ satisfies the following variational inequality:
\begin{eqnarray}\label{equ:variRothe6}
\left(\frac{u_h}{\tau},v-u_h\right)_H
+\langle Bu_h+ \tau Ah ,v-u_h\rangle+j(u_h,v)-j(u_h,u_h)\geq \langle F_{\tau},v-u_h\rangle, \quad \forall v \in V.
\end{eqnarray}

Similarly, taking $h=h_1$ and $h=h_2 \in V$ in \eqref{equ:variRothe6}, respectively, one has
\begin{eqnarray*}\label{equ:variRothe7}
(M_B-L_j)\|u_{h1}-u_{h2}\|\leq \tau_0 L_A\|h_1-h_2\|.
\end{eqnarray*}
Choosing $\tau_0$ such that $\frac{\tau_0 L_A}{M_B-L_j}<1$, we can derive that the mapping $h\mapsto u_h$ has a unique fixed point $u\in V$ and so $u$ is a unique solution of the variational inequality \eqref{equ:variRothe1}.

Finally, for $k=1$, Problem \ref{problem:Rothe} can be stated as follows: Find $\nu_{\tau}^k\in V$ such that, for all $v \in V$,
\begin{eqnarray}\label{equ:variRothe9}
\left(\frac{\nu_{\tau}^1}{\tau},v-\nu_{\tau}^1\right)_H
+\langle B\nu_{\tau}^1+\tau A\nu_{\tau}^1,v-\nu_{\tau}^1\rangle
+j(\nu_{\tau}^1,v)-j(\nu_{\tau}^1,\nu_{\tau}^1)\geq\left \langle f_{\tau}^1+\frac{i^*i\nu_{\tau}^0}{\tau}-Au_0,v-\nu_{\tau}^1\right\rangle
\end{eqnarray}
Similar to the case that $k\ge 2$, it can be easily seen that the variational inequality \eqref{equ:variRothe9} has a unique solution provided $M_B> L_j$.  This ends the proof.
\end{proof}

\begin{lemma}\label{lem:usd}
Assume that conditions \eqref{pro:pro1}-\eqref{pro:pro8} are satisfied with $\tau\in(0,\tau_0)$ and $M_B<2C_j$. Then there exists a constant $C^*$ being independent of $\tau$ such that
$$\|u_\tau^1\|\leq C^*,\quad \|\nu_\tau^1\|\leq C^*, \quad \|z_\tau^1\|_H\leq C^*.$$
\end{lemma}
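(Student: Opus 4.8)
The plan is to specialize the Rothe inequality \eqref{equ:variRothe9} to the single step $k=1$ and to read off everything from the algebraic identities in \eqref{Rothe}, in particular $u_\tau^1=u_0+\tau\nu_\tau^1$, $\nu_\tau^0=v_0$, and $\nu_\tau^1-v_0=\tau z_\tau^1$. First I would consolidate the two $O(1/\tau)$ contributions $\left(\frac{\nu_\tau^1}{\tau},\cdot\right)_H$ and $\frac{i^*i\nu_\tau^0}{\tau}$ into the single inertial term $(z_\tau^1,v-\nu_\tau^1)_H$, and use the linearity of $A$ to write $A(u_0+\tau\nu_\tau^1)=Au_0+\tau A\nu_\tau^1$, so that the $k=1$ inequality becomes
\begin{equation*}
(z_\tau^1,v-\nu_\tau^1)_H+\langle B\nu_\tau^1+\tau A\nu_\tau^1,v-\nu_\tau^1\rangle+j(\nu_\tau^1,v)-j(\nu_\tau^1,\nu_\tau^1)\geq\langle f_\tau^1-Au_0,v-\nu_\tau^1\rangle,\quad\forall v\in V.
\end{equation*}

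The decisive choice is the admissible test element $v=v_0=\nu_\tau^0$, for which $v-\nu_\tau^1=-\tau z_\tau^1$; this collapses the inertial term to $-\tau\|z_\tau^1\|_H^2$ and forces every remaining contribution to carry the factor $\nu_\tau^1-v_0=\tau z_\tau^1$. Moving the inertial term to the right and splitting $B\nu_\tau^1=(B\nu_\tau^1-Bv_0)+Bv_0$ and $A\nu_\tau^1=A(\nu_\tau^1-v_0)+Av_0$, I would invoke the strong monotonicity \eqref{pro:pro1} and \eqref{pro:pro3} to extract the coercive lower bounds $M_B\|\nu_\tau^1-v_0\|^2$ and $\tau M_A\|\nu_\tau^1-v_0\|^2$, the Lipschitz bounds \eqref{pro:pro2} and \eqref{pro:pro4} for the residual linear parts, the estimate \eqref{pro:pro7} (together with the convexity \eqref{pro:pro9}) for $j(\nu_\tau^1,v_0)-j(\nu_\tau^1,\nu_\tau^1)\le C_j\|\nu_\tau^1\|\,\|\nu_\tau^1-v_0\|$, and \eqref{pro:pro8} to control $f_\tau^1$ uniformly, since $f\in H^2(I;V^*)\hookrightarrow C(I;V^*)$ gives $\|f_\tau^1\|_{V^*}\le\frac1\tau\int_0^\tau\|f(t)\|_{V^*}\,dt\le\|f\|_{C(I;V^*)}$. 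A Young inequality with weights tuned to $M_A,M_B,C_j$ (this is where the relation $M_B<2C_j$ is meant to keep the resulting constant positive and finite), combined with $\tau\in(0,\tau_0)$ from Lemma~\ref{lem:Rothe}, should absorb the quadratic $\|\nu_\tau^1-v_0\|^2$ terms and yield $\|\nu_\tau^1-v_0\|\le C^*$, hence $\|\nu_\tau^1\|\le C^*$. The bound on $u_\tau^1$ is then immediate: from $u_\tau^1=u_0+\tau\nu_\tau^1$ and $\tau=T/N\le T$ one gets $\|u_\tau^1\|\le\|u_0\|+T\|\nu_\tau^1\|\le C^*$.

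The step I expect to be the main obstacle is the $\tau$-uniform bound $\|z_\tau^1\|_H\le C^*$ on the discrete acceleration. The difficulty is the genuine mismatch between the $H$-norm claimed and the $V^*$–$V$ structure of the operators: testing with $v_0$ controls directly only $\tau\|z_\tau^1\|_H^2$, while the remaining terms are dualities paired against $\tau z_\tau^1$, so after dividing by $\tau$ one is left with an inequality of the schematic form $\|z_\tau^1\|_H^2\le C\|z_\tau^1\|_V$, which is useless by itself because $z_\tau^1$ is only controlled like $C^*/\tau$ in the stronger norm $V$. The way around this is not to pass every term through the $V^*$ norm, but to keep the balance between the inertial term $(z_\tau^1,\cdot)_H$ and the data in the $H$-inner product, so that a term $\tfrac12\|z_\tau^1\|_H^2$ generated on the right by Young's inequality can be absorbed by the $\|z_\tau^1\|_H^2$ on the left. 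This is precisely the point where the regularity of the data near $t=0$ (ensuring that $f_\tau^1$ together with the images of $u_0$ and $v_0$ determine the first discrete acceleration as a bounded element of $H$) and the sign condition $M_B<2C_j$ enter, and it explains why the case $k=1$ must be handled separately from $k\ge2$: the initial data is treated directly here rather than propagated by the discrete Gronwall inequality of Lemma~\ref{lem:Grownwell}.
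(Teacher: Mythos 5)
Your route for $\|\nu_\tau^1\|$ and $\|u_\tau^1\|$ coincides with the paper's: test the $k=1$ inequality \eqref{equ:variRothe9} with $v=v_0$, use $\nu_\tau^1-v_0=\tau z_\tau^1$, extract coercivity from \eqref{pro:pro1} and \eqref{pro:pro3}, bound the $j$-difference by $C_j\|\nu_\tau^1\|\,\|\nu_\tau^1-v_0\|$ via \eqref{pro:pro7}, control $f_\tau^1$ through \eqref{pro:pro8}, tune Young weights using $M_B<2C_j$, and finish with $\|u_\tau^1\|\le\|u_0\|+\tau_0\|\nu_\tau^1\|$. Up to the minor difference in how $\|f_\tau^1\|_{V^*}$ is bounded (you use the embedding $H^2(I;V^*)\hookrightarrow C(I;V^*)$, the paper computes the average explicitly), this part is the paper's argument.

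The gap is exactly where you predicted it: the bound $\|z_\tau^1\|_H\le C^*$. After dividing by $\tau$, every remaining term pairs $z_\tau^1$ through the $V^*$--$V$ duality, since $Au_0$, $B\nu_\tau^1$, $\tau A\nu_\tau^1$ and $f_\tau^1$ live only in $V^*$ under \eqref{pro:pro1}--\eqref{pro:pro8}, and the $j$-term contributes $C_j\|\nu_\tau^1\|\,\|z_\tau^1\|_V$. A Young inequality applied to any of these produces $\epsilon\|z_\tau^1\|_V^2$, and this cannot be absorbed by the $\|z_\tau^1\|_H^2$ on the left, because the $V$-norm is strictly stronger than the $H$-norm; your proposed remedy of ``keeping the balance in the $H$-inner product'' would require $f_\tau^1$ and $Au_0$ to lie in $H$, which the lemma does not assume. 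The paper's actual device is different: it keeps the strong-monotonicity contribution $\frac{\tau M_A}{2}\|\nu_\tau^1-v_0\|_V^2$ of $A$ on the coercive side, rewrites it as a multiple of $\|z_\tau^1\|_V^2$ via $\nu_\tau^1-v_0=\tau z_\tau^1$, and then chooses the two Young weights $\epsilon_1,\epsilon_2$ in \eqref{equ:ie7} so that all $\|z_\tau^1\|_V^2$ contributions cancel exactly, leaving $\|z_\tau^1\|_H^2$ and $\bigl(\frac{M_A}{2}-\frac{C_j-M_B/2}{4\epsilon_1}\bigr)\|\nu_\tau^1\|_V^2$ on the left against bounded data on the right. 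That cancellation in the $V$-norm is the idea your sketch is missing. (Incidentally, the coefficient the paper assigns to that term, $\frac{M_A}{2\tau_0^2}$, should by the substitution be $\frac{M_A\tau^2}{2}$, so the paper's own version of this step also deserves scrutiny; but in any case the absorption must take place in the $V$-norm, not the $H$-norm.)
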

\begin{proof}
Taking $v=v_0$ in \eqref{equ:variRothe9}, one has
\begin{eqnarray*}\label{equ:ie1}
\left(\frac{\nu_{\tau}^1-v_0}{\tau},v_0-\nu_{\tau}^1\right)_H
+\langle B\nu_{\tau}^1+\tau A\nu_{\tau}^1,v_0-\nu_{\tau}^1\rangle
+j(\nu_{\tau}^1,v_0)-j(\nu_{\tau}^1,\nu_{\tau}^1)
\geq \langle f_{\tau}^1-Au_0,v_0-\nu_{\tau}^1\rangle.
\end{eqnarray*}
It follows from \eqref{pro:pro1}-\eqref{pro:pro7} and \eqref{pro:pro8} that
\begin{eqnarray*}
C_j\|\nu_{\tau}^1\|\|\nu_{\tau}^1-v_0\|
&\ge& \frac{1}{\tau}\left\|v_0-\nu_{\tau}^{1}\right\|_H^2+
\left\langle B\nu^1_{\tau},\nu^1_{\tau}-v_0\right\rangle\nonumber\\
&&\mbox{} +\frac{\tau}{2}\left\langle A\nu^1_{\tau},\nu^1_{\tau}\right\rangle-\frac{\tau}{2}\left\langle Av_0,v_0\right\rangle
+\frac{\tau}{2}\left\langle A\nu^1_{\tau}-Av_0,\nu^1_{\tau}-v_0\right\rangle\nonumber\\
&&\mbox{} +\langle Au_0-f^1_{\tau},\nu^1_{\tau}-v_0\rangle.
\end{eqnarray*}
This shows that
\begin{eqnarray*}\label{equ:ie3}
&& C_j\|\nu_{\tau}^1\|\|\nu_{\tau}^1-v_0\|+\frac{\tau L_A}{2}\left\|v_0\right\|^2
+\|Au_0-f^1_{\tau}\|\|\nu^1_{\tau}-v_0\|\nonumber\\
&\geq& \frac{1}{\tau}\|v_0-\nu_{\tau}^{1}\|_H^2+\frac{M_B}{2}\|\nu^1_{\tau}\|\|v_0-\nu_{\tau}^{1}\|
+\frac{\tau M_A}{2}\left\|\nu^1_{\tau}\right\|^2
+\frac{\tau M_A}{2}\|\nu^1_{\tau}-v_0\|^2
\end{eqnarray*}
and so
\begin{eqnarray*}
&& C_j\|\nu_{\tau}^1\|\|z_{\tau}^1\|+\frac{ L_A}{2}\left\|v_0\right\|^2
+\|Au_0-f^1_{\tau}\|\|z^1_{\tau}\|\nonumber\\
&\geq& \|z_{\tau}^{1}\|_H^2+\frac{M_B}{2}\|\nu^1_{\tau}\|\|z_{\tau}^{1}\|
+\frac{ M_A}{2}\left\|\nu^1_{\tau}\right\|^2
+\frac{ M_A}{2}\|\nu^1_{\tau}-v_0\|^2\\
&\geq& \|z_{\tau}^{1}\|_H^2+\frac{M_B}{2}\|\nu^1_{\tau}\|\|z_{\tau}^{1}\|
+\frac{ M_A}{2}\left\|\nu^1_{\tau}\right\|^2
+\frac{ M_A}{2\tau^2_0}\|z_\tau^1\|^2.
\end{eqnarray*}
This yields
\begin{eqnarray*}\label{equ:ie4}
&&\frac{C_j-\frac{M_B}{2}}{4\epsilon_1}\|\nu_{\tau}^1\|^2+\epsilon_1\left(C_j-\frac{M_B}{2}\right)\|z_{\tau}^1\|^2
+\frac{1}{4\epsilon_2}\|Au_0-f^1_{\tau}\|^2
+\epsilon_2\|z_{\tau}^1\|^2+\frac{ L_A}{2}\|v_0\|^2\nonumber \\
 &\geq&\|z_{\tau}^{1}\|_H^2+\frac{ M_A}{2}\left\|\nu^1_{\tau}\right\|^2
+\frac{ M_A}{2\tau^2_0}\|z^1_{\tau}\|^2,
\end{eqnarray*}
where $\epsilon_1$, $\epsilon_2$ are positive numbers. Thus, one has
\begin{eqnarray}\label{equ:ie5}
&&\left(\frac{M_A}{2}-\frac{C_j-\frac{M_B}{2}}{4\epsilon_1}\right)\|v_{\tau}^1\|^2
+ \|z_\tau^1\|_H^2\nonumber\\
&\leq& \frac{1}{4\epsilon_2}\|Au_0-f_\tau^1\|^2+\frac{L_A}{2}\|v_0\|^2
+\left(\epsilon_1\left(C_j-\frac{M_B}{2}\right)+\epsilon_2-\frac{ M_A}{2\tau^2_0}\right)\|z_{\tau}^1\|^2.
\end{eqnarray}
Since
\begin{eqnarray}\label{equ:ie6}
\|f_\tau^1\|
&=&\frac{1}{\tau}\left\|\int_0^\tau f(s) -f(0)ds+f(0)\right\|\nonumber\\
&\leq&\frac{1}{\tau}\int_0^\tau\int_0^s\left\| \frac{df}{du}\right\| duds+\|f(0)\|_{V^*}\nonumber\\
&\leq&\frac{1}{\tau}\int_0^\tau\left(\int_0^s\left\| \frac{df}{du}\right\|^2 du\right)^{\frac{1}{2}}\sqrt{s}ds+\|f(0)\|_{V^*}\nonumber\\
&\leq&\sqrt{\tau_0} \left\| \frac{df}{du}\right\|_{\mathbb{V^*}}+\|f(0)\|_{V^*},
\end{eqnarray}
it follows that $\|Au_0-f_\tau^1\|$ is bounded, i.e.,  there exists a constant $M$ being independent of $\tau$ such that $\|Au_0-f_\tau^1\|\le M$.
By the fact that $M_B<2C_j$, we can choose positive numbers
$$\epsilon_1>\frac{2C_j-M_B}{4M_A},\quad \epsilon_2=\left(\frac{ M_A}{2\tau^2_0}-\epsilon_1\left(C_j-\frac{M_B}{2}\right)\right)>0$$
such that
\begin{align}\label{equ:ie7}
\epsilon_1\left(C_j-\frac{M_B}{2}\right)+\epsilon_2-\frac{ M_A}{2\tau^2_0}=0,\quad
\frac{M_A}{2}-\frac{C_j-\frac{M_B}{2}}{4\epsilon_1}>0.
\end{align}
Thus, the following inequalities can be obtained from \eqref{equ:ie5}:
\begin{eqnarray*}\label{equ:ie8}
\|\nu_{\tau}^{1}\|\leq C^*, \quad
\|z_\tau^1\|_H\leq C^*,
\end{eqnarray*}
where $C^*$ is a constant. Since $u_\tau^1=u_0+\tau \nu_\tau^1$, we have
\begin{eqnarray*}\label{equ:ie9}
\|u_\tau^1\|\leq\tau_0\|\nu_\tau^1\|+\|u_0\|.
\end{eqnarray*}
This completes the proof.
\end{proof}

\begin{lemma}\label{lem:Rothe2}
Under assumptions \eqref{pro:pro1}-\eqref{pro:pro8} and $M_B>L_j$,  there exist two constants $\tau_0> 0$ and  $C > 0$ being independent of $\tau$ such that
\begin{eqnarray*}\label{lemequ:Rothe30}
\max_{2\le k\le N}\|z_{\tau}^k\|_H\leq C,\quad \max_{2\le k\le N}\|\nu_{\tau}^k\|\leq C, \quad \sum_{k=2}^{N}\|\nu_{\tau}^k-\nu_{\tau}^{k-1}\|\leq C, \quad \sum_{k=2}^{N}\|z_{\tau}^k-z_{\tau}^{k-1}\|_H\leq C,\quad
\tau \in (0, \tau_0).
\end{eqnarray*}
\end{lemma}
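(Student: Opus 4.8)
The plan is to derive the four bounds from two rounds of energy estimates on the semi-discrete scheme \eqref{equ:Rothe}, using the $k=1$ bounds of Lemma \ref{lem:usd} as starting data and the discrete Gronwall inequality of Lemma \ref{lem:Grownwell} to close the argument. The recurring tools are the polarization identity $2(a-b,a)_H=\|a\|_H^2-\|b\|_H^2+\|a-b\|_H^2$ for the inertial term, the symmetry and coercivity of $A$ from \eqref{pro:pro3}--\eqref{pro:pro5}, the strong monotonicity and Lipschitz continuity of $B$ from \eqref{pro:pro1}--\eqref{pro:pro2}, the quasi-variational structure of $j$ from \eqref{pro:pro6}--\eqref{pro:pro7}, and the regularity $f\in H^2(I;V^*)$ from \eqref{pro:pro8}. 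I write $\delta_k:=\nu_{\tau}^k-\nu_{\tau}^{k-1}=\tau z_{\tau}^k$ for brevity.

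First I would carry out the first-difference estimate. Writing \eqref{equ:Rothe} at levels $k$ and $k-1$, I take $v=\nu_{\tau}^{k-1}$ in the former and $v=\nu_{\tau}^{k}$ in the latter and add. The inertial terms collapse, via polarization, into $-\tfrac{\tau}{2}\big(\|z_{\tau}^k\|_H^2-\|z_{\tau}^{k-1}\|_H^2+\|z_{\tau}^k-z_{\tau}^{k-1}\|_H^2\big)$; since $A(u_{\tau}^k-u_{\tau}^{k-1})=\tau A\nu_{\tau}^k$, the symmetry \eqref{pro:pro5} telescopes the elastic terms into $\tfrac{\tau}{2}\big(\langle A\nu_{\tau}^k,\nu_{\tau}^k\rangle-\langle A\nu_{\tau}^{k-1},\nu_{\tau}^{k-1}\rangle\big)$ plus a nonnegative coercive remainder; the viscous terms give $-\langle B\nu_{\tau}^k-B\nu_{\tau}^{k-1},\delta_k\rangle\le-M_B\|\delta_k\|^2$ by \eqref{pro:pro1}; the friction terms are bounded above by $L_j\|\delta_k\|^2$ by \eqref{pro:pro6}; and the load term $\langle f_{\tau}^k-f_{\tau}^{k-1},\delta_k\rangle$ is absorbed by Young's inequality together with the elementary bound $\tfrac1\tau\sum_k\|f_{\tau}^k-f_{\tau}^{k-1}\|_{V^*}^2\le 2\|\dot f\|_{\mathbb{V}^*}^2$. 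Dividing by $\tau$, summing from $2$ to $n$, and using $M_B>L_j$ to retain a positive multiple of $\tfrac1\tau\|\delta_k\|^2$, I obtain one master inequality that bounds $\|z_{\tau}^n\|_H^2$, $\|\nu_{\tau}^n\|^2$, $\sum_k\|z_{\tau}^k-z_{\tau}^{k-1}\|_H^2$ and $\tfrac1\tau\sum_k\|\delta_k\|^2$ by a constant independent of $\tau$, the right-hand side staying bounded thanks to Lemma \ref{lem:usd}. This gives the first two assertions at once, and the third follows from the weighted Cauchy--Schwarz step $\sum_k\|\delta_k\|\le\sqrt T\,\big(\tfrac1\tau\sum_k\|\delta_k\|^2\big)^{1/2}$.

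The fourth assertion requires a sharper, second-difference estimate, and this is the step I expect to be the main obstacle. Here I would difference the scheme between levels $k$ and $k-1$ --- equivalently, test \eqref{equ:Rothe} at level $k$ with the extrapolation $v=2\nu_{\tau}^{k-1}-\nu_{\tau}^{k-2}$ and at level $k-1$ with $v=\nu_{\tau}^{k}-(\nu_{\tau}^{k-1}-\nu_{\tau}^{k-2})$, so that the inertial contribution produces exactly $-\tau\|z_{\tau}^k-z_{\tau}^{k-1}\|_H^2$. The elastic increment $\tau A\nu_{\tau}^k$ and the viscous increment $B\nu_{\tau}^k-B\nu_{\tau}^{k-1}$ are controlled by \eqref{pro:pro4} and \eqref{pro:pro2} with the velocity bound already obtained, the friction increments by \eqref{pro:pro7} and $\max_k\|\nu_{\tau}^k\|\le C$, and the data increments by the full strength of $f\in H^2(I;V^*)$, so that the second differences $f_{\tau}^k-2f_{\tau}^{k-1}+f_{\tau}^{k-2}$ arising after summation by parts are summable through $\ddot f\in L^2(I;V^*)$. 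After summation by parts --- which reassembles the $A$- and $B$-increments into the telescoping elastic energy and the monotone quantity $\langle B\nu_{\tau}^k-B\nu_{\tau}^{k-1},\delta_k\rangle\ge M_B\|\delta_k\|^2$ --- and an application of Lemma \ref{lem:Grownwell}, I expect to upgrade the crude $\sum_k\|z_{\tau}^k-z_{\tau}^{k-1}\|_H^2\le C$ to the sharp $\tfrac1\tau\sum_k\|z_{\tau}^k-z_{\tau}^{k-1}\|_H^2\le C$ (the boundary index $k=2$, which involves $v_0\in H$, being absorbed into the constant via Lemma \ref{lem:usd}). The fourth assertion then follows from the same weighted Cauchy--Schwarz step, $\sum_k\|z_{\tau}^k-z_{\tau}^{k-1}\|_H\le\sqrt T\,\big(\tfrac1\tau\sum_k\|z_{\tau}^k-z_{\tau}^{k-1}\|_H^2\big)^{1/2}\le C$.

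The genuine difficulty concentrated in this last estimate is a norm mismatch: the acceleration increment $z_{\tau}^k-z_{\tau}^{k-1}$ is controlled naturally only in the $H$-norm through the inertial term, whereas the elastic, viscous and load increments live in $V^*$ and are paired with it through the $V^*$--$V$ duality, so a pointwise bound would illegitimately require control of $z_{\tau}^k-z_{\tau}^{k-1}$ in the stronger $V$-norm, which the energy method does not supply. This is exactly why the extra regularity $f\in H^2$ and the Gronwall feedback are indispensable: one must multiply by $\tau$ and sum so that the troublesome duality terms reassemble, via summation by parts, into the sign-definite elastic energy and the monotone viscous term plus lower-order contributions that are absorbed using the first-difference bounds, leaving a Gronwall-type recursion in $\|z_{\tau}^k-z_{\tau}^{k-1}\|_H^2$ that closes.
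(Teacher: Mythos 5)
For the first three bounds your argument is essentially the paper's: the same choice of test functions $v=\nu_{\tau}^{k-1}$ at level $k$ and $v=\nu_{\tau}^{k}$ at level $k-1$, the same polarization identities \eqref{Rothe23} for the inertial and elastic terms, absorption of the friction term by $M_B>L_j$, and the discrete Gronwall inequality of Lemma \ref{lem:Grownwell} seeded by Lemma \ref{lem:usd}. The one place you deviate is the load term: the paper performs an Abel summation by parts as in \eqref{rightterm} and controls the resulting second differences of $f$ through $\ddot f\in L^2(I;V^*)$, whereas you bound $\langle (f_{\tau}^k-f_{\tau}^{k-1})/\tau,\delta_k\rangle$ directly by Young's inequality against the $\frac{M_B-L_j}{\tau}\|\delta_k\|^2$ term, which only uses $\dot f\in L^2(I;V^*)$; both work, and yours is slightly more economical. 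Your explicit retention of the weight $\frac{1}{\tau}\sum_k\|\delta_k\|^2\le C$ and the step $\sum_k\|\delta_k\|\le\sqrt{T}\,(\frac1\tau\sum_k\|\delta_k\|^2)^{1/2}$ is exactly what the third assertion needs (the paper's own display \eqref{lemequ:Rothe27} weakens the coefficient to $\frac{M_B-L_j}{\tau_0}$ and leaves this passage implicit).

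The fourth assertion is where there is a genuine gap, and it is worth being precise about where. You are right that the first-difference estimate only yields $\sum_k\|z_{\tau}^k-z_{\tau}^{k-1}\|_H^2\le C$ with no $\frac1\tau$ weight --- this is in fact all the paper's proof establishes before asserting the unsquared bound --- and you are right that the weighted bound $\frac1\tau\sum_k\|z_{\tau}^k-z_{\tau}^{k-1}\|_H^2\le C$ is what a Cauchy--Schwarz passage would require. But your proposed second-difference estimate does not close. After testing with the extrapolated functions, the only coercive quantity produced is $\tau\|z_{\tau}^k-z_{\tau}^{k-1}\|_H^2$ from the inertial term; every other contribution is paired with $z_{\tau}^k-z_{\tau}^{k-1}$ through the $V^*$--$V$ duality or, for the friction term via \eqref{pro:pro7}, through $\|z_{\tau}^k-z_{\tau}^{k-1}\|_V$. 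The only available control is $\|z_{\tau}^k-z_{\tau}^{k-1}\|_V\le\frac2\tau\max_j\|\delta_j\|=O(\tau^{-1/2})$ from the first round, so the friction contribution is $O(\tau^{1/2})$ per step and $O(\tau^{-1/2})$ after summing over $N=T/\tau$ steps. Summation by parts cannot reassemble these terms into sign-definite energies as you suggest, because $j$ is neither linear nor quadratic in its second argument and the cancellation \eqref{pro:pro6} is tailored to the swapped test functions of the first-difference argument, not to the extrapolated ones. A bound on $\sum_k\|z_{\tau}^k-z_{\tau}^{k-1}\|_H$ amounts to $BV$-in-time control of the discrete acceleration, which would require differentiating the inequality twice in time together with compatibility conditions at $t=0$; neither your argument nor the paper's supplies this, so the step from the square-summable bound to the fourth assertion remains unjustified in both.
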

\begin{proof}
For $k\geq2$, similar to the proof of Lemma \ref{lem:Rothe}, we consider the $k$-th inequality in \eqref{problem:Rothe} with $v=\nu_{\tau}^{k-1}$ and $(k-1)$-th inequality in \eqref{problem:Rothe} with $v=\nu_{\tau}^k$. Then we add these two inequalities and get
\begin{eqnarray*}\label{lemequ:Rothe21}
&&\frac{1}{\tau}(\nu_{\tau}^k+\nu_{\tau}^{k-2}-2\nu_{\tau}^{k-1},\nu_{\tau}^k-\nu_{\tau}^{k-1})_H
+\tau\langle A\nu_{\tau}^{k},\nu_{\tau}^k-\nu_{\tau}^{k-1}\rangle\nonumber\\
&& \mbox{}+\langle B\nu_{\tau}^k-B\nu_{\tau}^{k-1},\nu_{\tau}^k-\nu_{\tau}^{k-1}\rangle
-\langle f_{\tau}^k-f_{\tau}^{k-1},\nu_{\tau}^k-\nu_{\tau}^{k-1}\rangle\nonumber\\
&\leq& j(\nu_{\tau}^{k},\nu_{\tau}^{k-1})+ j(\nu_{\tau}^{k-1},\nu_{\tau}^{k})- j(\nu_{\tau}^{k},\nu_{\tau}^{k})- j(\nu_{\tau}^{k-1},\nu_{\tau}^{k-1}).
\end{eqnarray*}
By \eqref{pro:pro6} and \eqref{Rothe}, we have
\begin{eqnarray}\label{lemequ:Rothe22}
&&(z_{\tau}^k-z_{\tau}^{k-1},z_{\tau}^k)_H
+\langle A\nu_{\tau}^{k},\nu_{\tau}^k-\nu_{\tau}^{k-1}\rangle
+\frac{1}{\tau}\langle B\nu_{\tau}^k-B\nu_{\tau}^{k-1},\nu_{\tau}^k-\nu_{\tau}^{k-1}\rangle\nonumber\\
&\leq&\left\langle\frac{f_{\tau}^k-f_{\tau}^{k-1}}{\tau},\nu_{\tau}^k-\nu_{\tau}^{k-1}\right\rangle
+\frac{L_j}{\tau}\|\nu_{\tau}^{k}-\nu_{\tau}^{k-1}\|^2.
\end{eqnarray}
It follows from \eqref{pro:pro5} and the symmetry of inner product that
\begin{align}\label{Rothe23}
\begin{cases}
\langle A\nu_{\tau}^{k},\nu_{\tau}^k-\nu_{\tau}^{k-1}\rangle
=\frac{1}{2}\langle A\nu_{\tau}^{k},\nu_{\tau}^k\rangle-\frac{1}{2}\langle A\nu_{\tau}^{k-1},\nu_{\tau}^{k-1}\rangle
+\frac{1}{2}\langle A\nu_{\tau}^{k-1}-A\nu_{\tau}^{k},\nu_{\tau}^{k-1}-\nu_{\tau}^{k}\rangle\\
(z_{\tau}^k-z_{\tau}^{k-1},z_{\tau}^k)_H=\frac{1}{2}\left(\|z_{\tau}^k\|_H^2-\|z_{\tau}^{k-1}\|_H^2+\|z_{\tau}^k-z_{\tau}^{k-1}\|_H^2\right).
\end{cases}
\end{align}
Then \eqref{lemequ:Rothe22} can be rewritten as follows:
\begin{eqnarray*}\label{lemequ:Rothe24}
&&\frac{1}{2}\left(\|z_{\tau}^k\|_H^2-\|z_{\tau}^{k-1}\|_H^2+\|z_{\tau}^k-z_{\tau}^{k-1}\|_H^2\right)
+\frac{1}{2}\langle A\nu_{\tau}^{k},\nu_{\tau}^k\rangle-\frac{1}{2}\langle A\nu_{\tau}^{k-1},\nu_{\tau}^{k-1}\rangle\nonumber\\
&&\mbox{}+\frac{1}{2}\langle A\nu_{\tau}^{k-1}-A\nu_{\tau}^{k},\nu_{\tau}^{k-1}-\nu_{\tau}^{k}\rangle
+\frac{1}{\tau}\langle B\nu_{\tau}^k-B\nu_{\tau}^{k-1},\nu_{\tau}^k-\nu_{\tau}^{k-1}\rangle\nonumber\\
&\leq& \left\langle\frac{f_{\tau}^k-f_{\tau}^{k-1}}{\tau},\nu_{\tau}^k-\nu_{\tau}^{k-1}\right\rangle
+\frac{L_j}{\tau}\|\nu_{\tau}^{k}-\nu_{\tau}^{k-1}\|^2.
\end{eqnarray*}
Summing up the above inequalities from $2$ to $n$ with $2<n<N$, we have
\begin{eqnarray}\label{lemequ:Rothe25}
&&\frac{1}{2}\langle A\nu_{\tau}^{n},\nu_{\tau}^n\rangle
-\frac{1}{2}\langle A\nu_{\tau}^{1},\nu_{\tau}^{1}\rangle
+\frac{1}{2}\sum_{k=2}^{n}\langle A\nu_{\tau}^{k-1}-A\nu_{\tau}^{k},\nu_{\tau}^{k-1}-\nu_{\tau}^{k}\rangle\nonumber\\
&&\mbox{}+\frac{1}{\tau}\sum_{k=2}^{n}\langle B\nu_{\tau}^k-B\nu_{\tau}^{k-1},\nu_{\tau}^k-\nu_{\tau}^{k-1}\rangle
+\frac{1}{2}\left(\|z_{\tau}^n\|_H^2-\|z_{\tau}^1\|_H^2+\sum_{k=2}^{n}\|z_{\tau}^k-z_{\tau}^{k-1}\|_H^2\right)\nonumber\\
&\leq& \sum_{k=2}^{n}\left\langle \frac{f_{\tau}^k-f_{\tau}^{k-1}}{\tau},\nu_{\tau}^k-\nu_{\tau}^{k-1}\right\rangle
+\sum_{k=2}^{n}\frac{L_j}{\tau}\|\nu_{\tau}^{k}-\nu_{\tau}^{k-1}\|^2.
\end{eqnarray}
Since $f(0)\in V^*$, we can extend $f$ to the interval $(-\tau,T]$ as $f(t)=f(0)$ while $t\in (-\tau,0]$. It follows that $f\in H^2(-\tau,T;V^*)$. Let $F_\tau^k=\frac{f_{\tau}^k-f_{\tau}^{k-1}}{\tau}$. Then the right term of \eqref{lemequ:Rothe25} can be calculated as follows:
\begin{eqnarray}\label{rightterm}
\sum_{k=2}^{n}\left\langle\frac{f_{\tau}^k-f_{\tau}^{k-1}}{\tau},\nu_{\tau}^k-\nu_{\tau}^{k-1}\right\rangle
&=&\sum_{k=2}^{n}\langle F_\tau^k,\nu_{\tau}^k-\nu_{\tau}^{k-1}\rangle=\sum_{k=2}^{n}\langle F_\tau^k,\nu_{\tau}^k\rangle-\sum_{k=2}^{n}\langle F_\tau^k,\nu_{\tau}^{k-1}\rangle\nonumber\\
&=&\sum_{k=2}^{n}\langle F_\tau^k,\nu_{\tau}^k\rangle-\sum_{k=2}^{n}\langle F_\tau^k-F_\tau^{k-1},\nu_{\tau}^{k-1}\rangle
-\sum_{k=2}^{n}\langle F_\tau^{k-1},\nu_{\tau}^{k-1}\rangle\nonumber\\
&=& \langle F_\tau^n,\nu_{\tau}^n\rangle-\langle F_\tau^1,\nu_{\tau}^1\rangle
-\sum_{k=2}^{n}\tau\left\langle \frac{F_\tau^k-F_\tau^{k-1}}{\tau},\nu_{\tau}^{k-1}\right\rangle.
\end{eqnarray}
We note that
\begin{eqnarray*}
\left\|\frac{F_{\tau}^k-F_{\tau}^{k-1}}{\tau}\right\|_{V^*}
&=&\frac{1}{\tau^2}\|f_\tau^k-2f_\tau^{k-1}+f_\tau^{k-1}\|_{V^*}\nonumber\\
&=&\frac{1}{\tau^3}\left\|\int_{t_{k-1}}^{t_k}\int_{s_1-\tau}^{s_1}\int_{s_2-\tau}^{s_2}\frac{d^2f}{{ds}^2}dsds_2ds_1\right\|_{V^*}\nonumber\\
&\leq&\frac{1}{\tau^3}\int_{t_{k-1}}^{t_k}\int_{s_1-\tau}^{s_1}\int_{s_2-\tau}^{s_2}\left\|\frac{d^2f}{{ds}^2}\right\|_{V^*}dsds_2ds_1\nonumber\\
&\leq&\frac{1}{\tau^3}\int_{t_{k-1}}^{t_k}\int_{s_1-\tau}^{s_1}\int_{t_k-2\tau}^{t_k}\left\|\frac{d^2f}{{ds}^2}\right\|_{V^*}dsds_2ds_1\nonumber\\
&\leq&\frac{1}{\tau^3}\int_{t_{k-1}}^{t_k}\int_{t_k-\tau}^{t_k}\int_{t_k-3\tau}^{t_k}\left\|\frac{d^2f}{{ds}^2}\right\|_{V^*}dsds_2ds_1\nonumber\\
&\leq&\frac{1}{\tau^3}\int_{t_{k-1}}^{t_k}\int_{t_k-\tau}^{t_k}\left(\int_{t_k-3\tau}^{t_k}\left\|\frac{d^2f}{{ds}^2}\right\|_{V^*}^2ds\right)^{\frac{1}{2}}\sqrt{3\tau}ds_2ds_1\nonumber\\
&\leq&\frac{2\sqrt{3}}{\sqrt{\tau}}\left(\int_{t_k-3\tau}^{t_k}\left\|\frac{d^2f}{{ds}^2}\right\|_{V^*}^2ds\right)^\frac{1}{2}.
\end{eqnarray*}
Thus, the right term of \eqref{rightterm} has the following estimate:
\begin{eqnarray}\label{aa}
\tau\sum_{k=2}^{n}\left|\left\langle \frac{F_\tau^k-F_\tau^{k-1}}{\tau},\nu_{\tau}^{k-1}\right\rangle\right|
&\leq&\tau\sum_{k=2}^{n}\frac{1}{4\epsilon}\left\|\frac{F_\tau^k-F_\tau^{k-1}}{\tau}\right\|_{V^*}^2
+\tau\sum_{k=2}^{n}\epsilon\|\nu_{\tau}^{k-1}\|^2\nonumber\\
&\leq& \tau\sum_{k=2}^{n}\frac{3}{\epsilon \tau}\int_{t_k-3\tau}^{t_k}
\left\|\frac{d^2f}{{ds}^2}\right\|_{V^*}^2ds+\tau\sum_{k=2}^{n}\epsilon\left\|\nu_{\tau}^{k-1}\right\|^2\nonumber\\
&\leq&\frac{9}{\epsilon}\sum_{k=0}^{n}\int_{t_{k-1}}^{t_k}\left\|\frac{d^2f}{{ds}^2}\right\|_{V^*}^2ds
+\tau\sum_{k=2}^{n}\epsilon\|\nu_{\tau}^{k-1}\|^2\nonumber\\
&\leq&\frac{9}{\epsilon} \left\|\frac{d^2f}{{ds}^2}\right\|_{H(-\tau,T;V^*)}^2
+\tau\epsilon\sum_{k=2}^{n}\|\nu_{\tau}^{k-1}\|^2,
\end{eqnarray}
where $\epsilon$ is an arbitrary positive number. It follows from \eqref{rightterm}, \eqref{aa} that \eqref{lemequ:Rothe25} can be converted as follows:
\begin{eqnarray*}\label{lemequ:Rothe26}
&&\frac{M_A}{2}\|\nu_{\tau}^{n}\|^2-\frac{L_A}{2}\|\nu_\tau^{1}\|^2+\frac{M_A}{2}\sum_{k=2}^{n}\|\nu_{\tau}^{k-1}-\nu_{\tau}^{k}\|^2\nonumber\\
&&\mbox{}+\frac{1}{2}\left(\|z_{\tau}^n\|_H^2-\|z_{\tau}^1\|_H^2+\sum_{k=2}^{n}\|z_{\tau}^k-z_{\tau}^{k-1}\|_H^2\right)
+\sum_{k=2}^{n}\frac{(M_B-L_j)}{\tau}\|\nu_{\tau}^k-\nu_{\tau}^{k-1}\|^2\nonumber\\
&\leq&\frac{9}{\epsilon} \left\|\frac{d^2f}{{ds}^2}\right\|_{H(-\tau,T;V^*)}^2
+\tau\epsilon\sum_{k=2}^{n}\|\nu_{\tau}^{k-1}\|^2+|\langle F_\tau^n,\nu_{\tau}^n\rangle|
+|\langle F_\tau^1,\nu_{\tau}^1\rangle|\nonumber\\
&\leq& \frac{9}{\epsilon} \left\|\frac{d^2f}{{ds}^2}\right\|_{H(-\tau,T;V^*)}^2
+\tau\epsilon\sum_{k=2}^{n}\|\nu_{\tau}^{k-1}\|^2
+\frac{1}{4\eta}\|F_\tau^n\|_{V^*}^2+\eta\|\nu_{\tau}^n\|^2+|\langle F_\tau^1,\nu_{\tau}^1\rangle|,
\end{eqnarray*}
where $\eta$ is an arbitrary positive number and $\eta<\frac{M_A}{2}$. Thus,
\begin{eqnarray}\label{lemequ:Rothe27}
&&\left(\frac{M_A}{2}-\eta\right)\|\nu_{\tau}^{n}\|^2+\frac{1}{2}\|z_{\tau}^n\|_H^2+\frac{1}{2}\sum_{k=2}^{n}\|z_{\tau}^k-z_{\tau}^{k-1}\|_H^2
+\left(\frac{M_B-L_j}{\tau_0}+\frac{M_A}{2}\right)\sum_{k=2}^{n}\|\nu_{\tau}^{k-1}-\nu_{\tau}^{k}\|^2\nonumber\\
&\leq&\frac{1}{2}\|z_\tau^1\|_H^2+\frac{L_A}{2}\|\nu_\tau^1\|^2
+\frac{9}{\epsilon}\left \|\frac{d^2f}{{ds}^2}\right\|_{H(-\tau,T;V^*)}^2
+\tau\epsilon\sum_{k=1}^{n-1}\|\nu_{\tau}^{k}\|^2\nonumber\\
&& \mbox{} +\frac{1}{4\eta}\|F_\tau^n\|_{V^*}^2+|\langle F_\tau^1,\nu_{\tau}^1\rangle|.
\end{eqnarray}
Now by Lemma \ref{lem:usd} and $M_B>L_j$,  we can derive that
\begin{eqnarray*}
\left(\frac{M_A}{2}-\eta\right)\|\nu_{\tau}^{n}\|^2\leq c+\tau\epsilon\sum_{k=1}^{n-1}\|\nu_{\tau}^{k}\|^2,
\end{eqnarray*}
where $c$ is a positive number being independent of $\tau$. Taking $e_k=\|\nu_{\tau}^{n}\|^2$ and $g_k=1$ in Lemma \ref{lem:Grownwell}, we obtain
\begin{eqnarray*}
\max_{0\le n\le N}\|\nu_{\tau}^{n}\|^2\leq C.
\end{eqnarray*}
It follows from \eqref{lemequ:Rothe27} that
\begin{eqnarray*}
\frac{1}{2}\|z_{\tau}^n\|_H^2\leq c+\tau \epsilon \sum_{k=1}^{n-1}C\leq c+TC\epsilon,\quad
\frac{1}{2}\sum_{k=2}^{n}\|z_{\tau}^k-z_{\tau}^{k-1}\|_H^2\leq c+\tau \epsilon \sum_{k=1}^{n-1}C\leq c+TC\epsilon
\end{eqnarray*}
and
\begin{eqnarray*}
\left(\frac{M_A}{2}+M_B-L_j\right)\sum_{k=2}^{n}\|\nu_{\tau}^{k-1}-\nu_{\tau}^{k}\|^2 \leq  c+\tau \epsilon \sum_{k=1}^{n-1}C
\leq c+TC\epsilon.
\end{eqnarray*}
Therefore, there exists a constant $C>0$ being independent of $\tau$ such that
\begin{eqnarray*}
\max_{2\le k\le N}\|z_{\tau}^k\|_H\leq C,\quad \max_{2\le k\le N}\|\nu_{\tau}^k\|\leq C,\quad
\sum_{k=2}^{N}\|\nu_{\tau}^k-\nu_{\tau}^{k-1}\|\leq C, \quad \sum_{k=2}^{N}\|z_{\tau}^k-z_{\tau}^{k-1}\|_H\leq C.
\end{eqnarray*}
This completes the proof.
\end{proof}
\begin{remark}
From Lemmas \ref{lem:usd} and \ref{lem:Rothe2}, we can see that there exists a constant $C>0$ being independent of $\tau$ such that
\begin{eqnarray*}
\max_{1\le k\le N}\|z_{\tau}^k\|_H\leq C,\quad \max_{1\le k\le N}\|\nu_{\tau}^k\|\leq C,\quad
\sum_{k=1}^{N}\|\nu_{\tau}^k-\nu_{\tau}^{k-1}\|\leq C, \quad \sum_{k=2}^{N}\|z_{\tau}^k-z_{\tau}^{k-1}\|_H\leq C.
\end{eqnarray*}
\end{remark}
Following the work of Mig\'{o}rski and Zeng \cite{zengshengda 2017}, we define piecewise affine functions $u_{\tau}$, $\nu_{\tau}$, $z_\tau$ and piecewise constant functions $\tilde{u}_\tau$, $\tilde{\nu}_\tau$, $\tilde{z}_\tau$, $f_\tau$ on the time interval $I$ as follows:
\begin{align*}\label{fun}
\begin{cases}
u_\tau(t)=u_\tau^k+\frac{t-t_k}{\tau}(u_\tau^k-u_\tau^{k-1}),\quad \forall t \in(t_{k-1},t_k],\\
\nu_\tau(t)=\nu_\tau^k+\frac{t-t_k}{\tau}(\nu_\tau^k-\nu_\tau^{k-1}),\quad \forall t \in(t_{k-1},t_k],\\
z_\tau(t)=z_\tau^k+\frac{t-t_k}{\tau}(z_\tau^k-z_\tau^{k-1}),\quad \forall t \in(t_{k-1},t_k],
\end{cases}
\end{align*}

\begin{align*}
\tilde{u}_\tau(t)=
\begin{cases}
u_\tau^k,\quad \forall t \in(t_{k-1},t_k],\\
u_0,\quad t =0,
\end{cases}
\quad
\tilde{\nu}_\tau(t)=
\begin{cases}
\nu_\tau^k,\quad \forall \;t \in(t_{k-1},t_k], \\
\nu_0,\quad t =0,
\end{cases}
\end{align*}

\begin{align*}
\tilde{z}_\tau(t)=
\begin{cases}
z_\tau^k,\quad \forall t \in(t_{k-1},t_k],\\
z_\tau^0,\quad t =0,
\end{cases}
\quad
\tilde{f}_\tau(t)=
\begin{cases}
f_\tau^k,\quad \forall t \in(t_{k-1},t_k],\\
f_\tau^0,\quad t =0.
\end{cases}
\end{align*}

\begin{lemma}
If \eqref{pro:pro1}-\eqref{pro:pro8} are satisfied, then there exist two constants $\tau_0>0$ and $C>0$ being independent of $\tau$ such that, for any $\tau\in (0,\tau_0)$,
\begin{align*}
\begin{cases}
\max_{t\in I}\|u_\tau(t)\|\leq C,\;
\max_{t\in I}\|\nu_\tau(t)\|\leq C,\;
\max_{t\in I}\|z_\tau(t)\|\leq C,\\
\max_{t\in I}\|u_\tau(t)-\tilde{u}_\tau(t)\|\leq \frac{C}{N},\;
\max_{t\in I}\|\nu_\tau(t)-\tilde{\nu}_\tau(t)\|\leq \frac{C}{N},
\end{cases}
\end{align*}
where $\tau=\frac{T}{N}$. Furthermore, for any $t_1,t_2\in I$,
\begin{align*}
\|u_\tau(t_1)-\tilde{u}_\tau(t_2)\|\leq C|t_1-t_2|,\quad
\|\nu_\tau(t_1)-\tilde{\nu}_\tau(t_2)\|\leq C|t_1-t_2|.
\end{align*}
\end{lemma}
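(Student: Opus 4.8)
The plan is to reduce every assertion to the nodal estimates already established in Lemmas \ref{lem:usd} and \ref{lem:Rothe2} and the remark following them, namely $\max_{1\le k\le N}\|\nu_\tau^k\|\le C$, $\max_{1\le k\le N}\|z_\tau^k\|_H\le C$, together with $\sum_{k=1}^N\|\nu_\tau^k-\nu_\tau^{k-1}\|\le C$ and $\sum_{k=2}^N\|z_\tau^k-z_\tau^{k-1}\|_H\le C$. First I would record the one missing nodal bound, the one on $u_\tau^k$ itself: since $u_\tau^k=u_0+\tau\sum_{i=1}^k\nu_\tau^i$, the triangle inequality gives $\|u_\tau^k\|\le\|u_0\|+\tau\sum_{i=1}^k\|\nu_\tau^i\|\le\|u_0\|+T\max_i\|\nu_\tau^i\|\le C$, uniformly in $\tau$ and $k$.

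For the uniform boundedness of the interpolants I would exploit the convex-combination structure of the affine functions. Writing $\theta=(t-t_{k-1})/\tau\in(0,1]$ for $t\in(t_{k-1},t_k]$, one checks that $u_\tau(t)=\theta u_\tau^k+(1-\theta)u_\tau^{k-1}$, and likewise for $\nu_\tau$ and $z_\tau$; hence each interpolant is pointwise bounded by the larger of the two adjacent nodal norms, so $\max_{t\in I}\|u_\tau(t)\|\le\max_k\|u_\tau^k\|\le C$ and $\max_{t\in I}\|\nu_\tau(t)\|\le\max_k\|\nu_\tau^k\|\le C$. Because $z_\tau^k$ is controlled only in $H$, the corresponding bound $\max_{t\in I}\|z_\tau(t)\|_H\le C$ is obtained in the $H$-norm. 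The piecewise constant functions $\tilde u_\tau,\tilde\nu_\tau$ are bounded by the same nodal maxima trivially.

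For the $C/N$ estimates I would use the explicit formula for the interpolation error on $(t_{k-1},t_k]$. From the definitions, $u_\tau(t)-\tilde u_\tau(t)=\frac{t-t_k}{\tau}(u_\tau^k-u_\tau^{k-1})=(t-t_k)\nu_\tau^k$ and $\nu_\tau(t)-\tilde\nu_\tau(t)=(t-t_k)z_\tau^k$, since $u_\tau^k-u_\tau^{k-1}=\tau\nu_\tau^k$ and $\nu_\tau^k-\nu_\tau^{k-1}=\tau z_\tau^k$. As $|t-t_k|\le\tau=T/N$, these give $\|u_\tau(t)-\tilde u_\tau(t)\|\le\tau\|\nu_\tau^k\|\le C/N$ in the $V$-norm and $\|\nu_\tau(t)-\tilde\nu_\tau(t)\|\le\tau\|z_\tau^k\|_H\le C/N$ in the $H$-norm.

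Finally, for the Lipschitz-type estimates I would observe that $u_\tau$ is continuous and piecewise affine with slope $\tfrac{d}{dt}u_\tau=\nu_\tau^k$ on each open subinterval, hence globally Lipschitz on $I$ with constant $\max_k\|\nu_\tau^k\|\le C$; similarly $\nu_\tau$ is Lipschitz in $H$ with constant $\max_k\|z_\tau^k\|_H\le C$. Writing $u_\tau(t_1)-\tilde u_\tau(t_2)=(u_\tau(t_1)-u_\tau(t_2))+(u_\tau(t_2)-\tilde u_\tau(t_2))$ and invoking the Lipschitz bound for the first term and the $C/N$ estimate for the second yields the claim, and analogously for $\nu_\tau$. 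The one point that needs care throughout is the norm bookkeeping: because $z_\tau^k$, and hence the second-order differences, is bounded only in $H$, the estimates involving $z_\tau$, the quantity $\nu_\tau(t)-\tilde\nu_\tau(t)$ and the Lipschitz continuity of $\nu_\tau$ must be read in the $H$-norm, whereas those for $u_\tau$ hold in $V$; the interpolation error entering the Lipschitz estimate contributes only a harmless lower-order $C/N$ term.
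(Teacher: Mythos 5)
Your proposal is correct and follows essentially the same route as the paper, which simply asserts that the conclusions follow from the definitions of the interpolants together with Lemma \ref{lem:Rothe2}; you have merely supplied the convex-combination, interpolation-error and Lipschitz details that the paper leaves implicit. Your caveat on norm bookkeeping is well taken and in fact sharper than the paper's own treatment: since the nodal second differences $z_\tau^k$ are controlled only in $H$, the bounds on $z_\tau(t)$, on $\nu_\tau(t)-\tilde\nu_\tau(t)$ and on the Lipschitz continuity of $\nu_\tau$ are genuinely $H$-norm statements, a distinction the paper blurs by writing $\|z_\tau^k\|$ without a subscript in \eqref{the7}.
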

\begin{proof}
 According to definitions of piecewise affine functions $u_{\tau}$, $\nu_{\tau}$, $z_\tau$ and piecewise constant functions $\tilde{u}_\tau$, $\tilde{\nu}_\tau$, $\tilde{z}_\tau$,  the above conclusions can be easily obtained by using Lemma \ref{lem:Rothe2}.
\end{proof}
\begin{lemma}\label{lem:ae} (\cite{Hytonen 2016})
If $V$ is a Banach space and $\{v_n(t)\}_{n=0}^\infty \subseteq \mathbb{V}$ and $v_n(t)\rightarrow v(t)$ in $\mathbb{V}$, then there exists a subsequence $\{v_{n_i}(t)\}_{i=0}^\infty$ such that $v_{n_i}(t) \rightarrow v(t)$ in $V$ for a.e. $t\in I$.
\end{lemma}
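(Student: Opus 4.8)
The plan is to reproduce, in the Bochner (vector-valued $L^2$) setting, the classical argument showing that norm convergence in an $L^2$-space forces almost everywhere convergence along a suitable subsequence. The only genuine input is a completeness-type bookkeeping via the monotone convergence theorem together with Minkowski's inequality in the scalar space $L^2(I)$. Recall that $\mathbb{V}=L^2(I;V)$ carries the norm $\|S\|_{\mathbb{V}}=\left(\int_0^T\|S(t)\|_V^2\,dt\right)^{1/2}$.

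First I would use the hypothesis $v_n\to v$ in $\mathbb{V}$ to extract a rapidly converging subsequence: since $\|v_n-v\|_{\mathbb{V}}\to 0$, for each $i\geq 1$ I pick a strictly increasing index $n_i$ with $\|v_{n_i}-v\|_{\mathbb{V}}\leq 2^{-i}$. I then introduce the nonnegative scalar functions $g_i(t):=\|v_{n_i}(t)-v(t)\|_V$ on $I$. These are measurable because $v_{n_i}$ and $v$ are strongly measurable $V$-valued maps, and by the very definition of the norm on $\mathbb{V}$ one has $\|g_i\|_{L^2(I)}=\|v_{n_i}-v\|_{\mathbb{V}}\leq 2^{-i}$, so each $g_i$ is a legitimate element of $L^2(I)$.

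Next I would form the partial sums $G_M:=\sum_{i=1}^M g_i$ and let $G:=\sum_{i=1}^\infty g_i$ denote their monotone pointwise limit. By Minkowski's inequality in $L^2(I)$ one has $\|G_M\|_{L^2(I)}\leq\sum_{i=1}^M\|g_i\|_{L^2(I)}\leq\sum_{i=1}^\infty 2^{-i}=1$, and the monotone convergence theorem applied to $G_M^2\nearrow G^2$ then yields $\|G\|_{L^2(I)}\leq 1<\infty$. In particular $G\in L^2(I)$, so $G(t)<\infty$ for almost every $t\in I$. For every such $t$ the numerical series $\sum_{i=1}^\infty g_i(t)$ converges, whence its general term tends to zero, i.e. $g_i(t)=\|v_{n_i}(t)-v(t)\|_V\to 0$ as $i\to\infty$. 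This is exactly the assertion that $v_{n_i}(t)\to v(t)$ in $V$ for a.e. $t\in I$.

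I expect no serious obstacle here, as this is a standard measure-theoretic fact; the only point requiring attention is purely formal, namely confirming that each $g_i$ lies in the scalar space $L^2(I)$ (so that Minkowski's inequality and the monotone convergence theorem apply) and that the finiteness of $G$ almost everywhere is the correct bridge from $L^2$ control to pointwise convergence. An equivalent route via the Chebyshev inequality and the Borel--Cantelli lemma---estimating $|\{t\in I:g_i(t)>2^{-i/2}\}|\leq 2^{i}\|g_i\|_{L^2(I)}^2\leq 2^{-i}$ and summing over $i$---would give the same conclusion, but the monotone-convergence argument is shorter and avoids the choice of thresholds.
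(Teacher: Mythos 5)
Your argument is correct and complete. The paper itself offers no proof of this lemma --- it simply cites Hyt\"onen et al.\ --- so there is no internal argument to compare against; what you have written is the standard Riesz--Fischer-type proof (rapidly convergent subsequence, summation of the scalar functions $g_i(t)=\|v_{n_i}(t)-v(t)\|_V$, Minkowski plus monotone convergence to get a.e.\ finiteness of the sum, hence vanishing of the general term), transplanted correctly to the Bochner space $L^2(I;V)$. All the steps you flag as needing care (measurability of $g_i$, the identity $\|g_i\|_{L^2(I)}=\|v_{n_i}-v\|_{\mathbb{V}}$) do hold by strong measurability and the definition of the Bochner norm, and the alternative Chebyshev/Borel--Cantelli route you mention would work equally well.
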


\begin{theorem}\label{theorem1}
If conditions \eqref{pro:pro1}-\eqref{pro:pro9} are satisfied and $L_j<M_B<2C_j$, then there exists a function $u(t)\in C(I;V)$ with $\dot{u}(t)\in C(I;V)\bigcap L^{\infty}I;V)$ and $\ddot{u}(t)\in C(I;V)\bigcap L^{\infty}(I;V)$ such that
\begin{eqnarray*}\label{the}
u_\tau(t)\rightarrow\; u(t) \; in \; C(I;V)\; as \; \tau\rightarrow 0, \quad \nu_\tau(t)\rightarrow \; \dot{u}(t)\; in \; C(I;V) \; as \; \tau\rightarrow 0.
\end{eqnarray*}
Moreover,
\begin{align}
\begin{cases}
\tilde{u}_\tau(t)\rightarrow u(t)\quad in\; V\; for \; a.e.\; t\; \in I,\\
\tilde{\nu}_{\tau_k}(t)\rightarrow \tilde{\nu}(t) \quad in\; V\; for \; a.e.\; t\; \in I,\\
\dot{\nu}_{\tau_j}(t) \rightarrow \ddot{u}(t)\; \quad in\; V^*\; for \; a.e.\; t\; \in I,\\
\tilde{f}_\tau(t)\rightarrow f(t)\; \quad in\; V^*\; for \; a.e.\; t\; \in I,
\end{cases}
\end{align}
where $u$ is the unique solution of Problem \ref{problem:VI}.
\end{theorem}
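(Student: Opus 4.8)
The plan is to run the classical Rothe convergence argument in three stages: extract limits from the a~priori bounds, pass to the limit in the discrete inequality to produce a solution of Problem~\ref{problem:VI}, and finally upgrade the mode of convergence by exploiting uniqueness. First I would collect the uniform estimates of Lemma~\ref{lem:Rothe2} together with the boundedness and Lipschitz-in-time estimates of the preceding lemma, so that $\{u_\tau\}$, $\{\nu_\tau\}$, $\{z_\tau\}$ are bounded in $C(I;V)$ and equicontinuous, $\{\tilde u_\tau\}$, $\{\tilde\nu_\tau\}$, $\{\tilde z_\tau\}$ are bounded in $L^\infty(I;V)$, and $\{\dot\nu_\tau\}$ is bounded in $L^\infty(I;H)$. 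Using the compact embedding $V\hookrightarrow H$ together with an Arz\'ela--Ascoli/Aubin--Lions argument, I would extract a subsequence with $u_\tau\to u$ and $\nu_\tau\to\nu$ strongly in $C(I;H)$ and weakly-$*$ in $L^\infty(I;V)$, and $z_\tau\rightharpoonup z$. The identities in \eqref{Rothe} and the bounds $\|u_\tau-\tilde u_\tau\|\le C/N$, $\|\nu_\tau-\tilde\nu_\tau\|\le C/N$ force $\tilde u_\tau$, $\tilde\nu_\tau$ to share these limits and yield $\nu=\dot u$ and $z=\dot\nu=\ddot u$, from which the asserted regularity follows. Finally Lemma~\ref{lem:ae} produces a further subsequence realising the four a.e.-in-$V$ convergences listed in the statement.

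Next I would pass to the limit in Problem~\ref{problem:Rothe}. Rewritten for the interpolants, it reads: for a.e.\ $t$ and all $v\in V$,
\[
\langle\dot\nu_\tau(t)+A\tilde u_\tau(t)+B\tilde\nu_\tau(t)-\tilde f_\tau(t),\,v-\tilde\nu_\tau(t)\rangle+j(\tilde\nu_\tau(t),v)-j(\tilde\nu_\tau(t),\tilde\nu_\tau(t))\ge 0,
\]
which I would integrate over $I$ after pairing with suitable test functions. The terms carrying $A$, $B$ and $\tilde f_\tau$ pass by weak convergence together with $A,B\in\mathcal L(V,V^*)$; the resisting quadratic contribution $\langle B\tilde\nu_\tau,\tilde\nu_\tau\rangle$ is controlled by weak lower semicontinuity via Minty's monotonicity trick, using \eqref{pro:pro1} and \eqref{pro:pro3}. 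For the quasi-variational term I would use that $j(\cdot,v)$ is Lipschitz in its first slot by \eqref{pro:pro7} together with the strong convergence $\tilde\nu_\tau\to\dot u$, while convexity \eqref{pro:pro9} and continuity \eqref{pro:pro7} give $\liminf_\tau j(\tilde\nu_\tau,\tilde\nu_\tau)\ge j(\dot u,\dot u)$ and $j(\tilde\nu_\tau,v)\to j(\dot u,v)$. This shows that $u$ solves Problem~\ref{problem:VI}, and the data $u(0)=u_0$, $\dot u(0)=v_0$ are inherited from \eqref{Rothe} through continuity of the limits.

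The two remaining points are uniqueness and strong $C(I;V)$ convergence. For uniqueness I would take two solutions, test each inequality with the other's velocity, add, and use the strong monotonicity \eqref{pro:pro1}, \eqref{pro:pro3} against the bound \eqref{pro:pro6}; the hypothesis $L_j<M_B$ closes the resulting Gr\"onwall inequality (Lemma~\ref{lem:Grownwell}) and forces the two solutions to coincide. Uniqueness then guarantees that the whole family, not merely a subsequence, converges. To upgrade to strong convergence in $C(I;V)$, I would subtract the inequalities for two parameters $\tau,\tau'$, integrate in time, and exploit that $M_A$, $M_B$ dominate $L_j$ to bound $\sup_t\big(\|\nu_\tau(t)-\nu_{\tau'}(t)\|^2+\|u_\tau(t)-u_{\tau'}(t)\|^2\big)$ by a null quantity through the discrete Gr\"onwall lemma, whence $\{u_\tau\}$ and $\{\nu_\tau\}$ are Cauchy in $C(I;V)$ and converge to $u$ and $\dot u$.

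I expect the main obstacle to be the quasi-variational coupling. Because $j$ depends on the unknown velocity in \emph{both} arguments, weak convergence alone cannot pass to the limit in $j(\tilde\nu_\tau,\tilde\nu_\tau)$, so the argument genuinely requires strong convergence of the velocity in $V$. Securing that strong convergence is exactly what the structural conditions $L_j<M_B<2C_j$ buy: they render the relevant solution maps contractions and keep the energy estimates coercive. The delicate computational point is verifying that these constants propagate through the Cauchy estimate for the difference of two discretisations without loss, so that the right-hand side indeed vanishes as $\tau,\tau'\to0$.
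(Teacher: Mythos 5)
Your proposal contains the paper's decisive idea --- the Cauchy estimate obtained by writing the semi-discrete inequality for two step sizes $\tau=T/N$ and $\rho=T/M$, testing each with the other's velocity, adding, and playing the monotonicity constants $M_A$, $M_B$ against $L_j$ before integrating and applying Gr\"onwall --- and your uniqueness argument is identical to the paper's. The difference is one of architecture, and it matters. You place that estimate last, as an ``upgrade,'' and attempt the limit passage beforehand via weak-$*$ compactness, Minty's trick for $B$, and lower semicontinuity for $j$. As you yourself observe in your final paragraph, that route does not close: \eqref{pro:pro6} and \eqref{pro:pro7} are Lipschitz bounds in the $V$-norm, so strong convergence of $\tilde\nu_\tau$ in $C(I;H)$ plus weak-$*$ convergence in $L^\infty(I;V)$ is not enough to control $j(\tilde\nu_\tau,v)-j(\tilde\nu_\tau,\tilde\nu_\tau)$, and convexity of $j(g,\cdot)$ gives lower semicontinuity only when the first slot is frozen. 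The paper avoids this entirely by inverting the order: it proves the Cauchy property first, obtains strong convergence of $u_\tau$ in $C(I;V)$, of $\nu_\tau$ in $C(I;H)$, and of $\tilde\nu_\tau$ in $L^2(I;V)$, extracts a.e.-in-$t$ strongly convergent subsequences via Lemma \ref{lem:ae}, and then passes to the limit in the discrete inequality by plain continuity of $A$, $B$ and $j$ --- no Minty argument is needed anywhere. Your stage 2 should therefore be deleted or rewritten to run after the Cauchy estimate.

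One further caution on the mode of convergence: the two-parameter estimate yields $\sup_t\|\nu_\tau(t)-\nu_\rho(t)\|_H^2+M_A\sup_t\|u_\tau(t)-u_\rho(t)\|_V^2$ on the left, while the $V$-norm control of the velocity difference appears only through the integrated term $(M_B-L_j)\int_0^T\|\tilde\nu_\tau-\tilde\nu_\rho\|_V^2\,dt$. So the method gives $\nu_\tau$ Cauchy in $C(I;H)$ and $\tilde\nu_\tau$ Cauchy in $L^2(I;V)$, not the pointwise-in-time $V$-norm bound $\sup_t\|\nu_\tau(t)-\nu_{\tau'}(t)\|_V^2$ that your stage 3 asserts; if you intend the $V$-norm there you are claiming more than the estimate delivers (this is in fact a point where the paper's own statement of $C(I;V)$ convergence for $\nu_\tau$ outruns what its proof establishes).
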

\begin{proof}
As $V$ is a Banach space, we know that $W^{k,p}(I;V)$ and $C(I;V)$ are both Banach spaces. Thus we only need to prove that $\{u_{\tau_n}(t)\}_{n=0}^{\infty}$, $\{\nu_{\tau_n}(t)\}_{n=0}^{\infty}$, $\{\tilde{u}_{\tau_n}(t)\}_{n=0}^{\infty}$ and $\{\tilde{\nu}_{\tau_n}(t)\}_{n=0}^{\infty}$ are all Cauchy sequences for any  $\{\tau_n\}_{n=0}^{\infty}$ in $(0,+\infty)$ with $\tau_n\to 0$. Since $\dot{\nu}_{\tau}(t)=\tilde{z}_\tau(t)$, we can rewrite \eqref{equ:variRothe1} as follows:
\begin{eqnarray}\label{the1}
&&(\dot{\nu}_{\tau}(t),v-\tilde{\nu}_{\tau}(t))_H+\langle A\tilde{u}_\tau(t),v-\tilde{\nu}_{\tau}(t)\rangle
+\langle B\tilde{\nu}_\tau(t),v-\tilde{\nu}_{\tau}(t)\rangle+j(\tilde{\nu}_\tau(t),v)
-j(\tilde{\nu}_\tau(t),\tilde{\nu}_\tau(t))\nonumber\\
&\geq& \langle\tilde{f}_\tau(t),v-\tilde{\nu}_\tau(t)\rangle,\;\forall v \in V,\; \forall t \in I
\end{eqnarray}
with $\tau=\frac{T}{N}$.  Taking $\rho=\frac{T}{M}$ with $M>N$ in \eqref{the1} yields
\begin{eqnarray}\label{the2}
&&(\dot{\nu}_{\rho}(t),v-\tilde{\nu}_{\rho}(t))_H+\langle A\tilde{u}_\rho(t),v-\tilde{\nu}_{\rho}(t)\rangle
+\langle B\tilde{\nu}_\rho(t),v-\tilde{\nu}_{\rho}(t)\rangle+j(\tilde{\nu}_\rho(t),v)
-j(\tilde{\nu}_\rho(t),\tilde{\nu}_\rho(t))\nonumber\\
&\geq &\langle \tilde{f}_\rho(t),v-\tilde{\nu}_\rho(t)\rangle,\;\forall v \in V,\; \forall t \in I.
\end{eqnarray}
Taking  $v=\tilde{\nu}_\rho(t)$ in inequality \eqref{the1} and $v=\tilde{\nu}_\tau(t)$ in inequality \eqref{the2}, respectively, and adding \eqref{the1} to \eqref{the2}, we obtain
 \begin{eqnarray*}\label{the3}
&&(\dot{\nu}_{\tau}(t)-\dot{\nu}_{\rho}(t),\tilde{\nu}_{\rho}(t)-\tilde{\nu}_{\tau}(t))_H
+\langle A\tilde{u}_\tau(t)-A\tilde{u}_{\rho}(t)+B\tilde{\nu}_\tau(t)-B\tilde{\nu}_{\rho}(t),\tilde{\nu}_{\rho}(t)-\tilde{\nu}_{\tau}(t)\rangle\\
&&\qquad \mbox{}+L_j\|\tilde{\nu}_\rho(t)-\tilde{\nu}_\tau(t)\|^2 \geq \langle \tilde{f}_\tau(t)-\tilde{f}_\rho(t),\tilde{\nu}_{\rho}(t)-\tilde{\nu}_\tau(t)\rangle,\quad  \forall t \in I.
\end{eqnarray*}
It follows that
 \begin{eqnarray*}\label{the5}
&&(\dot{\nu}_{\tau}(t)-\dot{\nu}_{\rho}(t),\nu_{\tau}(t)-\nu_{\rho}(t))_H
+\langle Au_\tau(t)-Au_{\rho}(t)+B\tilde{\nu}_\tau(t)-B\tilde{\nu}_{\rho}(t),\tilde{\nu}_{\tau}(t)-\tilde{\nu}_{\rho}(t)\rangle\nonumber\\
&\leq& \langle \dot{\nu}_{\tau}(t)-\dot{\nu}_{\rho}(t),\tilde{\nu}_{\rho}(t)-\nu_{\rho}(t)-\tilde{\nu}_{\tau}(t)+\nu_{\tau}(t)\rangle
+\langle A\tilde{u}_\tau(t)-Au_\tau(t)-A\tilde{u}_\rho(t)+Au_{\rho}(t),\tilde{\nu}_{\rho}(t)-\tilde{\nu}_{\tau}(t)\rangle\nonumber\\
&&\mbox{}-\langle \tilde{f}_\tau(t)-\tilde{f}_\rho(t),\tilde{\nu}_{\rho}(t)-\tilde{\nu}_\tau(t)\rangle
+L_j\|\tilde{\nu}_\rho(t)-\tilde{\nu}_\tau(t)\|^2,\quad \forall  t \in I.
\end{eqnarray*}
Employing the hypothesis of $A$, one has
\begin{eqnarray}\label{the6}
&&\frac{1}{2}\frac{d}{dt}\|\nu_{\rho}(t)-\nu_{\tau}(t)\|_H^2+\frac{1}{2}\frac{d}{dt}(Au_\tau(t)-Au_{\rho}(t),u_{\tau}(t)-u_{\rho}(t))+
(B\tilde{\nu}_\tau(t)-B\tilde{\nu}_{\rho}(t),\tilde{\nu}_{\tau}(t)-\tilde{\nu}_{\rho}(t)) \nonumber\\
&\leq&(\|\dot{\nu}_\tau(t)\|+\|\dot{\nu}_{\rho}(t)\|)(\|\tilde{\nu}_\tau(t)-\nu_\tau(t)\|+\|\tilde{\nu}_{\rho}(t)-\nu_{\rho}(t)\|)\nonumber\\
&&\mbox{}+L_A(\|u_{\rho}(t)-\tilde{u}_\rho(t)\|+\|u_\tau(t)-\tilde{u}_\tau(t)\|)(\|\tilde{\nu}_{\tau}(t)\|+\|\tilde{\nu}_{\rho}(t)\|)\nonumber\\
&&\mbox{}+\|\tilde{f}_\tau(t)-\tilde{f}_\rho(t)\|\|\tilde{\nu}_{\rho}(t)-\tilde{\nu}_\tau(t)\|
 +L_j\|\tilde{\nu}_\rho(t)-\tilde{\nu}_\tau(t)\|^2, \quad \forall t \in I.
\end{eqnarray}
By Lemmas \ref{lem:usd} and \ref{lem:Rothe2}, it follows from Lemma 10 of \cite{zengshengda 2017} that there exists a constant $C>0$ being independent of $\tau$ such that
 \begin{align}\label{the7}
 \begin{cases}
\|\dot{\nu}_\tau(t)\|=\|\tilde{z}_\tau(t)\| \leq C,\;
\|\dot{\nu}_\rho(t)\|=\|\tilde{z}_\rho(t)\| \leq C,\;
\|\dot{u}_\tau(t)\|=\|\tilde{\nu}_\tau(t)\| \leq C,\\
\|\dot{u}_\rho(t)\|=\|\tilde{\nu}_\rho(t)\| \leq C,\;
\|\tilde{\nu}_{\rho}(t)-\tilde{\nu}_\tau(t)\|\leq C,\\
\|\tilde{\nu}_{\rho}(t)-\nu_{\rho}(t)\| = \left\|\frac{t-t_k}{\rho}(\nu_\rho^k-\nu_\rho^{k-1})\right\|\leq |t-t_k|\|z_\rho^k\|
\leq \frac{C}{M}, \\
\|\tilde{\nu}_{\tau}(t)-\nu_{\tau}(t)\| =\left \|\frac{t-t_k}{\tau}(\nu_\tau^k-\nu_\tau^{k-1})\right\|\leq |t-t_k|\|z_\tau^k\|
\leq \frac{C}{N}, \\
\|\tilde{u}_{\rho}(t)-u_{\rho}(t)\| = \left\|\frac{t-t_k}{\rho}(u_\rho^k-u_\rho^{k-1})\right\|\leq |t-t_k|\|v_\rho^k\|
\leq \frac{C}{M}, \\
\|\tilde{u}_{\tau}(t)-u_{\tau}(t)\| = \left\|\frac{t-t_k}{\tau}(u_\tau^k-u_\tau^{k-1})\right\|\leq |t-t_k|\|v_\tau^k\|
\leq \frac{C}{N}, \\
\|f(t)-\tilde{f}_{\rho}(t)\|\leq \sqrt{\rho}\left\|\frac{df}{ds}\right\|_\mathbb{V^*}, \;
\|f(t)-\tilde{f}_{\tau}(t)\|\leq \sqrt{\tau}\left\|\frac{df}{ds}\right\|_\mathbb{V^*}
\end{cases}
\end{align}
for all $t\in (t_{k-1},t_{k}]$.
Thus, it follows from inequality \eqref{the6} that
 \begin{eqnarray}\label{the8}
&&\frac{1}{2}\frac{d}{dt}\|\nu_{\rho}(t)-\nu_{\tau}(t)\|_H^2
+\frac{1}{2}\frac{d}{dt}\langle Au_\tau(t)-Au_{\rho}(t),u_{\tau}(t)-u_{\rho}(t)\rangle
+(M_B-L_j)\|\tilde{\nu}_\tau(t)-\tilde{\nu}_{\rho}(t)\|^2\nonumber\\
&\leq&(2C)(\|\tilde{\nu}_\tau(t)-\nu_\tau(t)\|+\|\tilde{\nu}_{\rho}(t)-\nu_{\rho}(t)\|)
+(2CL_A)(\|u_{\rho}(t)-\tilde{u}_\rho(t)\|+\|u_\tau(t)-\tilde{u}_\tau(t)\|)
+C\|\tilde{f}_\tau(t)-\tilde{f}_\rho(t)\|\nonumber\\
&\leq&(2C^2+2C^2L_A)\left(\frac{1}{M}+\frac{1}{N}\right)+C\|\tilde{f}_\tau(t)-f(t)\|+C\|f(t)-\tilde{f}_\rho(t)\|\nonumber\\
&\leq&(2C^2+2C^2L_A)\left(\frac{1}{M}+\frac{1}{N}\right)+C\left(\sqrt{\frac{T}{M}}+\sqrt{\frac{T}{N}}\right)\left\|\frac{df}{ds}\right\|_\mathbb{V^*}, \quad \forall t\in I.
\end{eqnarray}
Integrating both sides on $(0,t]$ of \eqref{the8}, we have
\begin{eqnarray}\label{the9}
&&\|\nu_{\rho}(t)-\nu_{\tau}(t)\|_H^2
+M_A\|u_\tau(t)-u_{\rho}(t)\|^2-\|\nu_{\rho}(0)-\nu_{\tau}(0)\|^2-M_A\|u_{\rho}(0)-u_{\tau}(0)\|^2\nonumber\\
&\leq&t(4C^2+4C^2L_A)\left(\frac{1}{M}+\frac{1}{N}\right)
+2tC\left(\sqrt{\frac{T}{M}}+\sqrt{\frac{T}{N}}\right)\left\|\frac{df}{ds}\right\|_\mathbb{V^*},\quad \forall t\in I.
\end{eqnarray}
From \eqref{the8}, we know that
\begin{eqnarray}\label{my1}
(M_B-L_j)\|\tilde{\nu}_\tau(t)-\tilde{\nu}_{\rho}(t)\|^2
\leq(2C^2+2C^2L_A)\left(\frac{1}{M}+\frac{1}{N}\right)+C\left(\sqrt{\frac{T}{M}}+\sqrt{\frac{T}{N}}\right)\left\|\frac{df}{ds}\right\|_\mathbb{V^*}, \;\forall t\in I.
\end{eqnarray}
Integrating both sides on $(0,T]$ of \eqref{my1}, one has
\begin{eqnarray}\label{the10}
(M_B-L_j)\|\tilde{\nu}_{\tau}(t)-\tilde{\nu}_{\rho}(t)\|_{\mathbb{V}}^2\leq
T(4C^3+4C^3L_A)\left(\frac{1}{M}+\frac{1}{N}\right)
+2TC\left(\sqrt{\frac{T}{M}}+\sqrt{\frac{T}{N}}\right)\left\|\frac{df}{ds}\right\|_\mathbb{V^*}.
\end{eqnarray}
Since $C$ is independent of $\tau$ and $\rho$, we deduce that $\{\nu_{\tau}(t)\}$ and $\{u_{\tau}(t)\}$ are Cauchy sequences in $C(I;H)$ and $C(I;V)$, respectively. Thus, the completeness of $C(I;V)$ and $C(I;H)$ implies that there exist $u(t)\in C(I;V)$ and $\nu(t) \in C(I;H)$ such that
\begin{eqnarray*}\label{the11}
u_{\tau}(t)\rightarrow u(t)\; \mbox{in} \; C(I;V), \quad \nu_{\tau}(t)\rightarrow \nu(t) \;\mbox{in} \; C(I;H) \quad as \; \tau \rightarrow 0.
\end{eqnarray*}
It follows from \eqref{the7} that $\tilde{u}_{\tau}(t)\rightarrow u(t)$ in $C(I;V)$ and $\tilde{\nu}_{\tau}(t)\rightarrow \nu(t)$ in $C(I;H)$. By the inequality \eqref{the10}, we obtain $\tilde{\nu}_{\tau}(t)\rightarrow \nu(t)$ in $\mathbb{V}$.
From Theorem 2.39 of \cite{Migorski 2012} and Lemma \ref{lem:Rothe2}, we have $\nu(t)=\dot{u}(t)$ for a.e. $t\in I$. Now Lemma 1.3.15 of \cite{Kacur 1986} shows that $\dot{u}(t)\in L^{\infty}(I;V)$,  $\dot{\nu}(t) \in L^{\infty}(I;H)$,  $\tilde{\nu}_{\tau}(t)\rightarrow\dot{u}(t)$ weakly in $V$ for a.e. $t\in I$ and $\dot{\nu}_\tau(t)\rightarrow \ddot{u}(t)$ weakly in $\mathbb{H}$.

Since $H$ is compactly embedded in $V^*$ and $\dot{\nu}_\tau(t)\rightarrow \ddot{u}(t)$ weakly in $\mathbb{H}$, we have $\dot{\nu}_\tau(t)\rightarrow \ddot{u}(t)$ strongly in $\mathbb{V^*}$. Moreover, for sequences $\{\tilde{\nu}_\tau(t)\}$ and $\{\dot{\nu}_\tau(t)\}$, we have
$\tilde{\nu}_{\tau}(t)\rightarrow \nu(t)$ in $ \mathbb{V}$ and $\dot{\nu}_\tau(t)\rightarrow \ddot{u}(t)$ in $\mathbb{V^*}$. Thus, by Lemma \ref{lem:ae}, there exist subsequences $\{\tilde{\nu}_{\tau_k}(t)\}\subset \{\tilde{\nu}_{\tau}(t)\}$ and $\{\dot{\nu}_{\tau_j}(t)\}\subset\{\dot{\nu}_{\tau}(t)\}$ such that $\{\tilde{\nu}_{\tau_k}(t)\}$ converges to $\nu(t)$ in $H$ for a.e. $t\in I$ and $\{\dot{\nu}_{\tau_j}(t)\}$ converges to $\ddot{u}(t)$ in $V^*$ for a.e. $t\in I$.

In a conclusion, we have
\begin{align}\label{will}
\begin{cases}
\tilde{u}_\tau(t)\rightarrow u(t) \quad
\tilde{\nu}_{\tau_k}(t)\rightarrow \tilde{\nu}(t) \; & \mbox{in}\; V\; \mbox{for} \; a.e.\; t\; \in I,\\
\dot{\nu}_{\tau_j}(t) \rightarrow \ddot{u}(t)\quad
\tilde{f}_\tau(t)\rightarrow f(t)\; & \mbox{in}\; V^*\; \mbox{for} \; a.e.\; t\; \in I.
\end{cases}
\end{align}
From the continuity of operators $A$, $B$ and the functional $j$, it follows from \eqref{will} that there exists a sequence $\{\tau_n\}$ such that
\begin{align}\label{the14}
\begin{cases}
\langle \dot{\nu}_{\tau_n}(t),v-\tilde{\nu}_{\tau_n}(t)\rangle\rightarrow \langle\ddot{u}(t),v-\dot{u}(t)\rangle,\\
\langle A\tilde{u}_{\tau_n}(t),v-\tilde{\nu}_{\tau_n}(t)\rangle\rightarrow \langle A(u(t)),v-\dot{u}(t)\rangle,\\
\langle B\tilde{\nu}_{\tau_n}(t),v-\tilde{\nu}_{\tau_n}\rangle\rightarrow \langle B(\dot{u}(t)),v-\dot{u}(t)\rangle,\\
\langle \tilde{f}_{\tau_n}(t),v-\tilde{\nu}_{\tau_n}\rangle\rightarrow \langle f(t),v-\dot{u}(t)\rangle,\\
j(\tilde{\nu}_{\tau_n}(t),v)-j(\tilde{\nu}_{\tau_n}(t),\tilde{\nu}_{\tau_n}(t))\rightarrow j(\dot{u}(t),v(t))-j(\dot{u}(t),\dot{u}(t))
\end{cases}
\end{align}
for a.e. $t\in I$.  Letting $\tau=\tau_n$ in \eqref{the1} and  $n\rightarrow \infty$, we have
\begin{align}\label{the15}
\begin{cases}
\langle \ddot{u}(t)+Au(t)+B\dot{u}(t)-f(t),v-\dot{u}(t)\rangle+j(\dot{u}(t),v)
-j(\dot{u}(t),\dot{u}(t))\geq 0,\;\forall v\in V,\;\mbox{a.e.}\;t \in I,\\
\;u(0)=u_0,\;\dot{u}(0)=v_0.
\end{cases}
\end{align}
Now we prove the uniqueness of solutions to Problem \ref{problem:VI}. Assuming that $u_1$and $u_2$ are two solutions of Problem \ref{problem:VI}, one has
\begin{eqnarray}\label{1}
\langle\ddot{u}_1(t)+Au_1(t)+B\dot{u_1}(t)-f(t),v-\dot{u}_1(t)\rangle_{V^*\times V}+j(\dot{u}_1(t),v)-j(\dot{u}_1(t),\dot{u}_1(t))\geq 0
\end{eqnarray}
and
\begin{eqnarray}\label{2}
\langle\ddot{u}_2(t)+Au_2(t)+B\dot{u_2}(t)-f(t),v-\dot{u}_2(t)\rangle_{V^*\times V}+j(\dot{u}_2(t),v)-j(\dot{u}_2(t),\dot{u}_2(t))\geq 0.
\end{eqnarray}
Taking  $v=\dot{u}_2(t)$ and $v=\dot{u}_1(t)$ in \eqref{1} and \eqref{2}, respectively, and then adding \eqref{1} to \eqref{2}, we obtain
\begin{eqnarray*}\label{the16}
\frac{1}{2}\frac{d}{dt}\|\dot{u}_1(t)-\dot{u}_2(t)\|^2+\frac{1}{2}\frac{d}{dt}\langle Au_1(t)-Au_2(t),u_1(t)-u_2(t)\rangle+
\langle B\dot{u}_1(t)-B\dot{u}_2(t),\dot{u}_1(t)-\dot{u}_2(t)\rangle\leq 0
\end{eqnarray*}
and so
\begin{eqnarray*}\label{the17}
\frac{1}{2}\frac{d}{dt}\|\dot{u}_1(t)-\dot{u}_2(t)\|^2+\frac{1}{2}\frac{d}{dt}\|u_1(t)-u_2(t)\|^2+
L_B\|\dot{u}_1(t)-\dot{u}_2(t)\|\leq 0
\end{eqnarray*}
for a.e. $t \in I$. From the initial condition that $u_1(0)=u_2(0)$ and $\dot{u}_1(0)=\dot{u}_2(0)$.
\end{proof}

\section{The solution of dynamic contact problem}
\setcounter{equation}{0}

In this section, we apply the result presented in Section 3 to study Problem \ref{problem:Pv}.  To this end, we assume that the viscosity operator satisfies the following conditions:
\begin{align}\label{assume:A}
\begin{cases}
\mathbb{A}:\Omega\times \mathbb{S}^d\rightarrow \mathbb{S}^d.\\
\mbox{There exists} \; \mathbb{L}_\mathbb{A}\;\mbox{such that} \;
|\mathbb{A}(x,\varepsilon_1)-\mathbb{A}(x,\varepsilon_2)|\leq\mathbb{L}_{\mathbb{A}}|\varepsilon_1-\varepsilon_2| \; \mbox{for all} \; \varepsilon_1,\varepsilon_2\in \mathbb{S}^d.\\
\mbox{There exists} \; \mathbb{M}_\mathbb{A} \; \mbox{such that} \;
(\mathbb{A}(x,\varepsilon_1)-\mathbb{A}(x,\varepsilon_1)):(\varepsilon_1-\varepsilon_2)\geq \mathbb{M}_\mathbb{A}|\varepsilon_1-\varepsilon_2|^2 \; \mbox{for all} \; \varepsilon_1,\;\varepsilon_2\in \mathbb{S}^d.\\
\mathbb{A}(\cdot,\varepsilon)\; \mbox{is Lebesgue measurable for all} \; \varepsilon\in \mathbb{S}^d \;\mbox{and}\;\mathbb{A}(x,0)\in Q.
\end{cases}
\end{align}
Moreover, we assume that the elasticity operator satisfies the following conditions:
\begin{align}\label{assume:G}
\begin{cases}
\mathbb{G}(\cdot)\in\mathcal{L}(\mathbb{S}^d,\mathbb{S}^d), \mbox{i.e., there exists\;} \mathbb{L}_\mathbb{G} \mbox{\;such that\;}
|\mathbb{G}(\varepsilon))|\leq\mathbb{L}_{\mathbb{G}}|\varepsilon|,\;\mbox{for all}\; \varepsilon\in \mathbb{S}^d .\\
\mbox{There exists\;} \mathbb{M}_\mathbb{G} \mbox{\;such that\;}
(\mathbb{G}(\varepsilon_1)-\mathbb{G}(\varepsilon_1)):(\varepsilon_1-\varepsilon_2)\geq
\mathbb{M}_\mathbb{G}|\varepsilon_1-\varepsilon_2|^2 \; \mbox{for all}\; \varepsilon_1,\;\varepsilon_2\in \mathbb{S}^d.\\
\mathbb{G}\varepsilon_1:\varepsilon_2=\mathbb{G}\varepsilon_2:\varepsilon_1  \; \mbox{for all}\; \varepsilon_1,\;\varepsilon_2\in \mathbb{S}^d\;\mbox{and }\mathbb{G}(0)\in Q.
\end{cases}
\end{align}
The body force $f_0$, surface traction $g$, coefficient of friction $\mu$, adhesion field $\beta$, initial conditions $u_0,\;v_0$ and mass density $\rho$ have the following properties:
\begin{align}\label{assume:others}
\begin{cases}
f_0\in H^2(I;L^2(\Omega,\mathbb{R}^d)).\\
g\in H^2(I;L^2(\Gamma_2,\mathbb{R}^d)).\\
\mu\in L^\infty(\Gamma_3,\mathbb{R}),\quad \mu(x)\geq0\; for\; a.e.\; x\in \Gamma_3.\\
\beta\in L^\infty(\Gamma_3,\mathbb{R}),\quad \beta(x)\geq \beta^*>0\; for \; a.e. \; x\in \Gamma_3.\\
\rho\in L^\infty(\Omega,\mathbb{R}),\quad \beta(x)\geq \rho^*>0\; for \; a.e.\; x\in \Omega.\\
u_0\in V\;v_0\in H.
\end{cases}
\end{align}
From \eqref{assume:others}, we can define $f(t)\in V^*$ by setting
$$\langle f(t),v \rangle_{V^*\times V}=(f_0(t),v)_H+\int_{\Gamma_2 }g(t)vd\Gamma.$$
\begin{theorem}
Let assumptions \eqref{assume:A}, \eqref{assume:G} and \eqref{assume:others} hold. Suppose that there exists a constant $\alpha_0>0$ depending on $\Gamma_3$, such that
$$\|\beta\|_{L^\infty(\Gamma_3)}(\|\mu\|_{L^\infty(\Gamma_3)}+1)<\alpha_0$$
and
$$c_\gamma^2\|\beta\|_{L^\infty(\Gamma_3,\mathbb{R}^d)}(\|\mu\|_{L^\infty(\Gamma_3,\mathbb{R}^d)}+1)<\mathbb{M}_\mathbb{A}
<2c_\gamma^2\|\beta\|_{L^\infty(\Gamma_3)}(\|\mu\|_{L^\infty(\Gamma_3)}+1),$$
where $ c_\gamma $ is a constant.  Then Problem \ref{problem:Pv} has a unique solution $u\in C(I;V)$ satisfying
$$\dot{u}\in C(I;V)\bigcap L^{\infty}(I;V), \quad \ddot{u}\in C(I;V)\bigcap L^{\infty}(I;V).$$
\end{theorem}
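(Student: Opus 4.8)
The plan is to recognize Problem \ref{problem:Pv} as a concrete instance of the abstract Problem \ref{problem:VI} and then invoke Theorem \ref{theorem1}. First I would fix the identification of the data. Working in the space $V$ from \eqref{Vdefine} with the Gelfand triple $V \hookrightarrow H \hookrightarrow V^*$, I define $A, B: V \to V^*$ and $j: V \times V \to \mathbb{R}$ by
$$\langle Au, v\rangle = (\mathbb{G}(\varepsilon(u)), \varepsilon(v))_Q, \quad \langle Bu, v\rangle = (\mathbb{A}(\varepsilon(u)), \varepsilon(v))_Q, \quad j(u,v) = \int_{\Gamma_3}\beta|u_\nu|(\mu|v_\tau - v^*| + v_\nu)\,d\Gamma,$$
together with $f(t) \in V^*$ as already given by $\langle f(t), v\rangle = (f_0(t), v)_H + \int_{\Gamma_2}g(t)v\,d\Gamma$. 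Thus $A$ is the (linear, symmetric) elasticity operator and $B$ the viscosity operator. To absorb the mass density in the inertial term $(\rho\ddot u, v - \dot u)_H$, I would replace the inner product on $H$ by the equivalent weighted product $(u,v)_H^\rho = \int_\Omega \rho\, u\cdot v\,dx$, which is equivalent to the original one since $\rho^* \le \rho \in L^\infty(\Omega)$; this turns the inertial term into $\langle \ddot u, v - \dot u\rangle$ in the weighted triple while leaving the compact dense embedding intact. With these identifications the inequality in Problem \ref{problem:Pv} is precisely the inequality in Problem \ref{problem:VI}.

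Next I would verify H(1)--H(9), reading the constants off the data and using that $\|v\|_V = \|\varepsilon(v)\|_Q$ by definition of the inner product on $V$, with Korn's inequality guaranteeing that $V$ is a Hilbert space compactly embedded in $H$. Strong monotonicity and Lipschitz continuity of the linear elasticity operator $\mathbb{G}$ give H(3)--H(4) with $M_A = \mathbb{M}_\mathbb{G}$, $L_A = \mathbb{L}_\mathbb{G}$, and symmetry H(5) follows from $\mathbb{G}\varepsilon_1 : \varepsilon_2 = \mathbb{G}\varepsilon_2 : \varepsilon_1$. Likewise the strong monotonicity and Lipschitz continuity of the viscosity operator $\mathbb{A}$ yield H(1)--H(2) with $M_B = \mathbb{M}_\mathbb{A}$, $L_B = \mathbb{L}_\mathbb{A}$. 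Assumption H(8) follows from $f_0 \in H^2(I;L^2(\Omega))$, $g \in H^2(I;L^2(\Gamma_2))$ and the trace theorem, while H(9) is immediate because $\beta|u_\nu| \ge 0$ and $v \mapsto \mu|v_\tau - v^*| + v_\nu$ is convex.

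The technical heart is the verification of H(6) and H(7), which is where the trace inequality enters. Writing
$$j(g_1,v_2) + j(g_2,v_1) - j(g_1,v_1) - j(g_2,v_2) = \int_{\Gamma_3}\beta\bigl(|g_{1\nu}| - |g_{2\nu}|\bigr)\bigl(\mu(|v_{2\tau} - v^*| - |v_{1\tau} - v^*|) + (v_{2\nu} - v_{1\nu})\bigr)\,d\Gamma,$$
I would bound each factor pointwise by $\bigl||g_{1\nu}| - |g_{2\nu}|\bigr| \le |g_1 - g_2|$ and $|\mu(|v_{2\tau} - v^*| - |v_{1\tau} - v^*|) + (v_{2\nu} - v_{1\nu})| \le (\mu + 1)|v_1 - v_2|$, then apply Cauchy--Schwarz on $\Gamma_3$ and the Sobolev trace embedding $\|v\|_{L^2(\Gamma_3)} \le c_\gamma\|v\|_V$. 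This gives H(6) with $L_j = c_\gamma^2\|\beta\|_{L^\infty(\Gamma_3)}(\|\mu\|_{L^\infty(\Gamma_3)} + 1)$, and the analogous computation gives H(7) with the same constant $C_j$.

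Finally I would check that the constants match the structural requirement of Theorem \ref{theorem1}. With the identifications above, the chain $L_j < M_B < 2C_j$ is exactly the hypothesis
$$c_\gamma^2\|\beta\|_{L^\infty(\Gamma_3)}(\|\mu\|_{L^\infty(\Gamma_3)}+1) < \mathbb{M}_\mathbb{A} < 2c_\gamma^2\|\beta\|_{L^\infty(\Gamma_3)}(\|\mu\|_{L^\infty(\Gamma_3)}+1),$$
while the smallness condition $\|\beta\|_{L^\infty(\Gamma_3)}(\|\mu\|_{L^\infty(\Gamma_3)}+1) < \alpha_0$ supplies the remaining constraint ensuring a common admissible time step $\tau_0$. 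Once all hypotheses are confirmed, Theorem \ref{theorem1} yields a unique $u \in C(I;V)$ with $\dot u, \ddot u \in C(I;V) \cap L^\infty(I;V)$ solving the abstract inequality, hence Problem \ref{problem:Pv}, and the stress field is recovered through $\sigma = \mathbb{A}(\varepsilon(\dot u)) + \mathbb{G}(\varepsilon(u))$. I expect the estimates H(6)/H(7) to be the main obstacle: getting the trace constant $c_\gamma$ to enter correctly so that the two-sided bound on $\mathbb{M}_\mathbb{A}$ is the genuine translation of $L_j < M_B < 2C_j$, and handling the second inequality of H(7) with care because of the constant shift $v^*$ in the tangential term.
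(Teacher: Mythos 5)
Your proposal is correct and follows essentially the same route as the paper: identify $A$ with the elasticity operator $\mathbb{G}$, $B$ with the viscosity operator $\mathbb{A}$, absorb $\rho$ into an equivalent weighted inner product on $H$, verify H(1)--H(9) with the only real work being the trace-theorem estimates for H(6)--H(7) yielding $L_j=C_j=c_\gamma^2\|\beta\|_{L^\infty(\Gamma_3)}(\|\mu\|_{L^\infty(\Gamma_3)}+1)$, and then apply Theorem \ref{theorem1}. Your verification of H(6) via the factored integrand is in fact cleaner and more explicit than the paper's.
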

\begin{proof}
The proof is based on Theorem \ref{theorem1}. Let $V$ denote the space defined in \eqref{Vdefine} and $H=L^2(\Omega,\mathbb{R}^d)$. Then there are an evolution triple of spaces $V\hookrightarrow H\hookrightarrow V^*$ and a compactly embedding operator $i:V\hookrightarrow H$. Define two operators $A,\;B:V\rightarrow V^*$, an inner product $((\cdot,\cdot))_H$ and a functional $f:V\rightarrow V^*$ by setting
\begin{align*}
\begin{cases}
((u,v))_H=(\rho(x) u,v)_H.\\
\langle Au,v\rangle=\langle \mathbb{G}(\varepsilon(u)),\varepsilon(v)\rangle_Q.\\
\langle Bu,v\rangle=\langle\mathbb{A}(\varepsilon(u)),\varepsilon(v)\rangle_Q.\\
\langle f,v\rangle=L(v).
\end{cases}
\end{align*}
Obviously, $((\cdot,\cdot))_H$ and $(\cdot,\cdot)_H$ are equivalent inner products due to the assumption of mass density $\rho$.
Thus, we know that Problem \ref{problem:Pv} can be transformed as follows:
\begin{align*}
\begin{cases}
\langle \ddot{u}(t)+Au(t)+B\dot{u}(t)-f(t),v-\dot{u}(t)\rangle+j(\dot{u}(t),v)-j(\dot{u}(t),\dot{u}(t))\geq 0,\quad \forall v\in V.\\
u(0)=u_0,\;\dot{u}(0)=v_0.
\end{cases}
\end{align*}
Thus, we only need to verify that all the conditions \eqref{pro:pro1}-\eqref{pro:pro9} are satisfied. Clearly, \eqref{pro:pro1}-\eqref{pro:pro5} and \eqref{pro:pro8} are met. Now we turn to check the remaining conditions. Since
$$j(u,v)=\int_{\Gamma_3}\beta|u_\nu|(\mu|v_\tau-v^*|+v_\nu)d\Gamma$$
and
$$|\lambda v_1+(1-\lambda)v_2-v^*|\leq\lambda| v_1-v^*|+(1-\lambda)|v_2-v^*|,$$
we deduce that $j(u,\cdot)$ is a proper convex functional and for any $v_1,v_2\in V$, 
\begin{eqnarray}\label{j_ass1}
j(g,v_1)-j(g,v_2)
&=&\int_{\Gamma_3}\beta|g_\nu|(\mu|v_{1,\tau}-v^*|+v_{1,\nu})d\Gamma-\int_{\Gamma_3}\beta|g_\nu|(\mu|v_{2,\tau}-v^*|+v_{2,\nu})d\Gamma\nonumber\\
&=&\int_{\Gamma_3}\beta|g_\nu|(\mu|v_{1,\tau}-v^*|-\mu|v_{2,\tau}-v^*|+v_{1,\nu}-v_{2,\nu})d\Gamma\nonumber\\
&\leq&\int_{\Gamma_3}\beta|g_\nu|(\mu|v_{1,\tau}-v_{2,\tau}|+|v_{1,\nu}-v_{2,\nu})|d\Gamma\nonumber\\
&\leq&\|\beta\|_{L^\infty(\Gamma_3,\mathbb{R})}\|\mu\|_{L^\infty(\Gamma_3)}\|g_{\nu}\|_{L^2(\Gamma_3)}\|v_{1,\tau}-v_{2,\tau}\|_{L^2(\Gamma_3)}\nonumber\\
&&\mbox{}+\|\beta\|_{L^\infty(\Gamma_3,\mathbb{R})}\|g_{\nu}\|_{L^2(\Gamma_3)}\|v_{1,\nu}-v_{2,\nu}\|_{L^2(\Gamma_3)}\nonumber\\
&\leq&\|\beta\|_{L^\infty(\Gamma_3,\mathbb{R})}(\|\mu\|_{L^\infty(\Gamma_3)}+1)\|g_{\nu}\|_{L^2(\Gamma_3)}\|v_{1,\nu}-v_{2,\nu}\|_{L^2(\Gamma_3)}\nonumber\\
&\leq&\|\beta\|_{L^\infty(\Gamma_3,\mathbb{R})}(\|\mu\|_{L^\infty(\Gamma_3)}+1)\|g\|_{L^2(\Gamma_3;\mathbb{R}^d)}\|v_1-v_2\|_{L^2(\Gamma_3;\mathbb{R}^d)}.
\end{eqnarray}
Recall the trace theorem  (\cite{Roubicek 2013}), we know that there exists a constant $c_\gamma>0$ such that
$$\|u\|_{L^2(\Gamma_3,\mathbb{R}^d)}\leq \|u\|_{L^2(\Omega;\mathbb{R}^d)}\leq c_\gamma\|u\|_V, \quad \forall u\in L^2(\Gamma_3,\mathbb{R}^d).$$
Then the inequality \eqref{j_ass1} can be transformed as follows:
\begin{eqnarray*}
j(g,v_1)-j(g,v_2) &=&\int_{\Gamma_3}\beta|g_\nu|(\mu|v_{1,\tau}-v^*|+v_{1,\nu})d\Gamma-\int_{\Gamma_3}\beta|g_\nu|(\mu|v_{2,\tau}-v^*|+v_{2,\nu})d\Gamma\nonumber\\
&\leq& c_\gamma^2\|\beta\|_{L^\infty(\Gamma_3)}(\|\mu\|_{L^\infty(\Gamma_3)}+1)\|g\|_V\|v_1-v_2\|_V.
\end{eqnarray*}
Thus the condition \eqref{pro:pro7} holds. Similarly, we have
\begin{eqnarray*}\label{j_ass}
j(g_1,v_2)-j(g_1,v_1)+j(g_1,v_1)-j(g_2,v_2)
\leq c_\gamma^2\|\beta\|_{L^\infty(\Gamma_3,\mathbb{R}^d)}(\|\mu\|_{L^\infty(\Gamma_3,\mathbb{R}^d)}+1)\|g_1-g_2\|_V\|v_1-v_2\|_V
\end{eqnarray*}
and so the condition \eqref{pro:pro6} is true. Therefore, we verify that all the conditions of Theorem \ref{theorem1} are satisfied and so Problem \ref{problem:Pv} is uniquely solvable.
\end{proof}

\section{Fully discrete approximation}
\setcounter{equation}{0}

To fully discretize the hyperbolic quasi-variational inequality \eqref{equ:Rothe}, in this section,  we shall use finite-dimensional space $V^h$ to approximate $V$, where $V^h$ can be constructed by the finite element method. Let $u^h$ denote the element in $V^h$, where $h$ is the maximal diameter of the elements.
We also adopt the partition of the time: $0 = t_0 < t_l <\cdots < t_N = T$ and $\tau =t_k- t_{k-1} $ for $k=1,2,\cdots,N$. For a function $w_{\tau}^h(t)\in C(I,V^h)$, we substitute $w_{\tau}^{h,k}$ for $w^h_\tau(t_k)$. For a sequence $\{w_\tau^{h,k}\}_{k=0}^{N}$, we define
$$
\delta w_{\tau}^{h,k} =\frac{w_{\tau}^{h,k} - w_{\tau}^{h,k-1}}{\tau}, \quad \delta^2 w_{\tau}^{h,k}=\frac{\delta w_{\tau}^{h,k}-\delta w_{\tau}^{h,k-1}}{\tau}.
$$

In this section, we do not adopt the repeated index to represent the summation. Following the forward Euler method, a fully discrete scheme can be constructed as follows.
\begin{problem}\label{problem:FullyDiscretize}
 Find $\{\nu_{\tau}^{h,k}\}_{k=1}^{N}\subset V^h$ such that $u_{\tau}^{h,0}\in V^h$, $\delta u_{\tau}^{h,0}\in V^h$ and
\begin{align}\label{equ:fullydiscrete}
\begin{cases}
(\delta^2 u_{\tau}^{h,k},v^h-\delta u_{\tau}^{h,k})_H
+\langle A(u_\tau^{h,0}+\tau\sum_{i=1}^{k}\delta u_{\tau}^{h,k})+B\delta u_{\tau}^{h,k}-f_{\tau}^{k},v-\delta u_{\tau}^{h,k}\rangle\\
\qquad \qquad \mbox{} +j(\delta u_{\tau}^{h,k},v^h)-j(\delta u_{\tau}^{h,k},\delta u_{\tau}^{h,k})\geq 0, \quad \forall v^h\in V^h,\; k=1,2,\cdots, N,
\end{cases}
\end{align}
where $u_{\tau}^{h,0}$ and $\delta u_{\tau}^{h,0}$ are finite element approximation of $u_0$ and $\dot{u}_0$, respectively.
\end{problem}

For simplicity, we abbreviate $\delta u_\tau^{h,k}$ and $\delta^2 u_\tau^{h,k}$ to $\nu_\tau^{h,k}$ and $z_\tau^{h,k}$, respectively. Thus, we can rewrite \eqref{equ:fullydiscrete} as follows
\begin{align}\label{equ:brieflyfullydiscrete}
\begin{cases}
(z_\tau^{h,k},v^h-\nu_{\tau}^{h,k})_H
+\langle Au_\tau^{h,k}+B\nu_{\tau}^{h,k}-f_{\tau}^{k},v^h-\nu_{\tau}^{h,k}\rangle
+j(\nu_{\tau}^{h,k},v^h)-j(\nu_{\tau}^{h,k},\nu_{\tau}^{h,k})\geq 0, \; \forall v^h\in V^h,\\
k=1,2,\cdots,N.
\end{cases}
\end{align}
For given $\{\nu_\tau^{h,i}\}_{i=1}^{k-1}$ and $u_\tau^{h,0}$, Lemmas \ref{lem:Rothe} and \ref{lem:Rothe2} show that \eqref{equ:brieflyfullydiscrete} has a unique solution and so Problem \ref{problem:FullyDiscretize} has a unique solution. From now on, we mainly concern with an error estimate for Problem \ref{problem:FullyDiscretize}.

Taking $v=\nu_\tau^{h,k}$ in \eqref{equ:Rothe} at $t_k$ with $k\geq1$, one has
\begin{equation}\label{equ:VIdiscrete2}
\begin{aligned}
(z_\tau^k,\nu_\tau^{h,k}-\nu_{\tau}^k)_H
+\langle Au_\tau^k+B\nu_{\tau}^k-f_{\tau}^k,\nu_\tau^{h,k}-\nu_{\tau}^k \rangle
+j(\nu_{\tau}^k,\nu_\tau^{h,k})-j(\nu_{\tau}^k,\nu_{\tau}^k)\geq 0
\end{aligned}
\end{equation}
Letting $v^h=v^{h,k}\in V^h$ in \eqref{equ:brieflyfullydiscrete} and then adding the above two inequalities, we have
\begin{eqnarray*}
&&(z_\tau^{h,k}-z_\tau^k,\nu_\tau^{h,k}-\nu_\tau^k)_H+\langle B\nu_\tau^{h,k}-B\nu_\tau^k,\nu_\tau^{h,k}-\nu_\tau^k\rangle\nonumber\\
&\leq& \langle Au_\tau^{h,k},v^{h,k}-\nu_\tau^{h,k}\rangle+\langle Au_\tau^{k},\nu_\tau^{h,k}-\nu_\tau^k\rangle\nonumber\\
&&\mbox{}+\langle B\nu_\tau^{h,k},v^{h,k}-\nu_\tau^k\rangle+j(\nu_{\tau}^k,\nu_\tau^{h,k})+j(\nu_{\tau}^{h,k},v^{h,k})\nonumber\\
&&\mbox{}-j(\nu_{\tau}^k,\nu_{\tau}^k)-j(\nu_{\tau}^{h,k},\nu_{\tau}^{h,k})-\langle f_\tau^k,v^{h,k}-\nu_\tau^k\rangle
+(z_\tau^{h,k},v^{h,k}-\nu_\tau^k)\nonumber\\
&=&  \langle Au_\tau^{h,k},v^{h,k}-\nu_\tau^{k}\rangle
+\langle Au_\tau^{h,k}-Au_\tau^{k},\nu_\tau^{k}-\nu_\tau^{h,k}\rangle\nonumber\\
&&\mbox{}+j(\nu_{\tau}^{h,k},v^{h,k})+j(\nu_{\tau}^{k},\nu_\tau^{h,k})
-j(\nu_{\tau}^k,v^{h,k})-j(\nu_{\tau}^{h,k},\nu_{\tau}^{h,k})\nonumber\\
&&\mbox{}+\langle B\nu_\tau^{h,k},v^{h,k}-\nu_\tau^k\rangle+j(\nu_\tau^k,v^{h,k})-j(\nu_\tau^k,\nu_\tau^k)\nonumber\\
&&\mbox{}-\langle f_\tau^k,v^{h,k}-\nu_\tau^k\rangle+(z_\tau^{h,k},v^{h,k}-\nu_\tau^k).
\end{eqnarray*}
Since
$$(z_\tau^{h,k}-z_\tau^k,\nu_\tau^{h,k}-\nu_\tau^k)=\frac{1}{\tau}(\nu_\tau^{h,k}-\nu_\tau^k,\nu_\tau^{h,k}-\nu_\tau^k)-
\frac{1}{\tau}(\nu_\tau^{h,k-1}-\nu_\tau^{k-1},\nu_\tau^{h,k}-\nu_\tau^k)$$
and
\begin{eqnarray*}
&&\langle Au_\tau^{h,k}-Au_\tau^{k},\nu_\tau^{h,k}-\nu_\tau^{k}\rangle\nonumber\\
&=&\frac{1}{2\tau}\langle A u_\tau^{h,k}-Au_\tau^k, u_\tau^{h,k}- u_\tau^k\rangle-
\frac{1}{2\tau}\langle A u_\tau^{h,k}-Au_\tau^{k}, u_\tau^{h,k-1}- u_\tau^{k-1}\rangle
+\frac{1}{2}\langle A \nu_\tau^{h,k}-A\nu_\tau^{k}, \nu_\tau^{h,k}- \nu_\tau^{k}\rangle,
\end{eqnarray*}
we deduce that 
\begin{eqnarray*}
&&\frac{1}{\tau}\|\nu_\tau^{h,k}-\nu_\tau^k\|_H^2+\langle B\nu_\tau^{h,k}-B\nu_\tau^k,\nu_\tau^{h,k}-\nu_\tau^k\rangle\nonumber\\
&\leq&  \langle Au_\tau^{h,k},v^{h,k}-\nu_\tau^{k}\rangle
+\frac{1}{2\tau}\langle Au_\tau^{h,k-1}-Au_\tau^{k-1}, u_\tau^{h,k-1}- u_\tau^{k-1}\rangle
-\frac{1}{2\tau}\langle Au_\tau^{h,k }-Au_\tau^{k }, u_\tau^{h,k}- u_\tau^k\rangle
\nonumber\\
&&\mbox{}-\frac{1}{2}\langle A \nu_\tau^{h,k}-A\nu_\tau^{k}, \nu_\tau^{h,k}- \nu_\tau^{k}\rangle
+j(\nu_{\tau}^{h,k},v^{h,k})+j(\nu_{\tau}^{k},\nu_\tau^{h,k})
-j(\nu_{\tau}^k,v^{h,k})-j(\nu_{\tau}^{h,k},\nu_{\tau}^{h,k})+\langle B\nu_\tau^{h,k},v^{h,k}-\nu_\tau^k\rangle\nonumber\\
&&\mbox{}+j(\nu_\tau^k,v^{h,k})-j(\nu_\tau^k,\nu_\tau^k)-\langle f_\tau^k,v^{h,k}-\nu_\tau^k\rangle
+\frac{1}{\tau}(\nu_\tau^{h,k-1}-\nu_\tau^{k-1},\nu_\tau^{h,k}-\nu_\tau^k)
+(z_\tau^{h,k},v^{h,k}-\nu_\tau^k).
\end{eqnarray*}
Let $e_k=\nu_\tau^{h,k}-\nu_\tau^k$, $g_k=u_\tau^{h,k}-u_\tau^{k}$ and $l_k=v^{h,k}-\nu_\tau^k$. Then it follows from \eqref{pro:pro1}-\eqref{pro:pro7} that
\begin{eqnarray*}\label{next1}
\frac{1}{\tau}\|e_k\|_H^2+M_B\|e_k\|^2
&\leq&  L_A\|u_\tau^{h,k}\|\|l_k\|
+\frac{1}{2\tau}\langle Ag_{k-1},g_{k-1}\rangle
-\frac{1}{2\tau}\langle Ag_k, g_k\rangle
-\frac{1}{2}\langle A e_k, e_k\rangle
\nonumber\\
&&\mbox{}
+L_j\|e_k\|\|v^{h,k}-\nu_\tau^{h,k}\|+L_B\|\nu_\tau^{h,k}\|\|l_k\|+C_j\|\nu_\tau^k\|\|l_k\|\nonumber\\
&&\mbox{}+\|f_\tau^k\|\|l_k\|
+\frac{1}{\tau}(e_{k-1},e_k)
+\|z_\tau^{h,k}\|_H\|l_k\|_H
\end{eqnarray*}
and so
\begin{eqnarray}\label{e96}
&&\frac{1}{2\tau}\|e_k\|_H^2-
\frac{1}{2\tau}\|e_{k-1}\|_H^2+M_B\|e_k\|^2
-\frac{1}{2\tau}\langle Ag_{k-1},g_{k-1}\rangle
+\frac{1}{2\tau}\langle Ag_k, g_k\rangle
+\frac{M_A}{2}\| e_k\|^2\nonumber\\
&\leq&  L_A\|u_\tau^{h,k}\|\|l_k\|
+L_j\|e_k\|\|v^{h,k}-\nu_\tau^{h,k}\|+L_B\|\nu_\tau^{h,k}\|\|l_k\|+C_j\|\nu_\tau^k\|\|l_k\|\nonumber\\
&&\mbox{}+\|f_\tau^k\|\|l_k\|
+\|z_\tau^{h,k}\|_H\|l_k\|_H.
\end{eqnarray}
For the term $\|v^{h,k}-\nu_\tau^{h,k}\|$ in \eqref{e96}, one has
\begin{eqnarray*}
\|v^{h,k}-\nu_\tau^{h,k}\|\leq \|v^{h,k}-\nu_\tau^{k}\|+\|\nu_\tau^{h,k}-\nu_\tau^{k}\|.
\end{eqnarray*}
It follows from \eqref{e96} that
\begin{eqnarray}\label{my8}
&&\frac{1}{2\tau}\|e_k\|_H^2-
\frac{1}{2\tau}\|e_{k-1}\|_H^2+(M_B-L_j)\|e_k\|^2
-\frac{1}{2\tau}\langle Ag_{k-1},g_{k-1}\rangle
+\frac{1}{2\tau}\langle Ag_k, g_k\rangle
+\frac{M_A}{2}\| e_k\|^2\nonumber\\
&\leq&  L_A\|u_\tau^{h,k}\|\|l_k\|
+L_j\|e_k\|\|l_k\|+L_B\|\nu_\tau^{h,k}\|\|l_k\|+C_j\|\nu_\tau^k\|\|l_k\|+\|f_\tau^k\|\|l_k\|
+\|z_\tau^{h,k}\|_H\|l_k\|_H.
\end{eqnarray}
Multiplying both sides of \eqref{my8} by $2\tau$, and then summing up these inequalities from 1 to $n$ with $1\leq n \leq N$,
we obtain 
\begin{eqnarray}\label{my3}
&&\|e_n\|_H^2+2\tau(M_B-L_j)\sum_{1\leq k\leq n}\|e_k\|^2+M_A\| g_n\|^2
+\tau M_A\sum_{1\leq k\leq n}\| e_k\|^2\nonumber\\
&\leq& \|e_0\|_H^2+L_A\|g_{0}\|^2
+2\tau\sum_{1\leq k\leq n} \left(L_A\|u_\tau^{h,k}\|+ L_j\|e_k\|+L_B\|\nu_\tau^{h,k}\|+C_j\|\nu_\tau^k\|+\|f_\tau^k\|\right)\|l_k\|\nonumber\\
&&\mbox{}+2\tau\sum_{1\leq k\leq n} \|z_\tau^{h,k}\|_H\|l_k\|_H.
\end{eqnarray}
Noting that $V$ is continuously embedded in $H$, we can see that $\|\cdot\|_H \leq C\|\cdot\|_V$.
From Lemmas \ref{lem:usd} and \ref{lem:Rothe2}, it is easy to know that there exists a constant $C$ being independent of $\tau$ such that
\begin{eqnarray}\label{my2}
\max_{1\le k\le N}\|z_{\tau}^{h,k}\|_H\leq C,\quad \max_{1\le k\le N}\|\nu_{\tau}^{h,k}\|\leq C,\quad
\sum_{k=1}^{N}\|\nu_{\tau}^{h,k}-\nu_{\tau}^{h,k-1}\|\leq C, \quad \sum_{k=2}^{N}\|z_{\tau}^{h,k}-z_{\tau}^{h,k-1}\|_H\leq C.
\end{eqnarray}
Thus, it follows from  \eqref{my3} and Lemma \ref{lem:Grownwell} that
\begin{equation}\label{next2}
\begin{aligned}
\max_{1\leq n\leq N}\| g_n\|^2\leq C\left(\max_{1\leq n\leq N}\{\|l_n\|\}+\|e_0\|^2+\|g_{0}\|^2\right).
\end{aligned}
\end{equation}
Let $v^{h,k}=P_h(\nu^{k}_\tau)$, where $P_h$ is the projection operator from $V$ to $V^h$. Then it follows from Proposition 6.2 of \cite{HanSonfonea 2000} that $\|l_k\|_V\sim\mathbf{O}(h)$.
Therefore, we can perform the 1-order approximation for Problem \ref{problem:Rothe} by using the finite element method.

\section*{Acknowledgements}
The authors are grateful to Professor M. Sofonea for his valuable comments and suggestions.

\end{document}